\documentclass[final]{siamltex}
\usepackage{amssymb, amsmath}
\usepackage{float,epsfig}
\usepackage{algpseudocode}
\usepackage{algorithm}
\usepackage{enumitem}
\usepackage{tikz}
\usepackage{color}
\usepackage{multirow}
\usepackage{cleveref}
\usepackage{braket}
\usepackage{subfigure}
\usepackage[utf8]{inputenc}
\usetikzlibrary{patterns}
\usetikzlibrary{graphs}
\usepackage{algpseudocode}
\usepackage{algorithm}
\usepackage{comment}
\usepackage{tikz-network}
\usepackage{caption}
\usepackage{subcaption}
\captionsetup{font=small, labelfont=bf}
\usetikzlibrary{graphs,graphs.standard,calc}

\usepackage{tikz}
\newcommand{\Arrow}[1]{%
	\parbox{#1}{\tikz{\draw[->](0,0)--(#1,0);}}
}

\newcommand{\rsss}{\rotatebox[]{90}{$\boxminus$}\kern-0.7em{\mathrel{\raisebox{.2ex}{\Arrow{.35cm}}}}\!\!}

\newcommand{\csss}{\text{$\boxminus\kern-0.655em{\mathrel{\raisebox{-.2ex}{\rotatebox[]{-90}{\Arrow{.34cm}}}}}$\ }}

\newtheorem{exam}[theorem]{\bf Example}
\newtheorem{observation}[theorem]{ Observation}
\newcommand{\ba}{\begin{array}}
\newcommand{\ea}{\end{array}}


\newtheorem{note}{Note}
\newcommand{\be}{\begin{equation}}
\newcommand{\ee}{\end{equation}}
\newcommand{\beano}{\begin{eqnarray*}}
\newcommand{\eeano}{\end{eqnarray*}}





\def \Z{{\mathbb Z}}
\def \A{{\mathcal A}}

\def \d{{\bf d}}
\def \r{{\bf r}}

\def \diag{\mathrm{diag}}

\def \Im{\mathsf{Im}}


\title{Arithmetical Structures on Fan Graphs}
\author{Dilli Ram Chhetri \thanks{Department of Mathematics, Sikkim University, Sikkim-737102, India, ({\tt drchhetri.22pdmt01@sikkimuniversity.ac.in})} \and  Namita Behera \thanks{Department of Mathematics, Sikkim University, Sikkim-737102, India, ({\tt nbehera@cus.ac.in}, niku.namita@gmail.com)}  
\and Raj Bhawan Yadav \thanks{Department of Mathematics, Sikkim University, Sikkim-737102, India ({\tt rbyadav01@cus.ac.in})}}


\begin{document}

\maketitle

\begin{abstract}
In this paper, we study the arithmetical structures on Fan Graphs $F_n$. Let $G$ be a finite and connected graph. An arithmetical structure on $G$ is a pair $({\bf d}, {\bf r})$ of positive integer vectors such that $\textbf{r}$ is primitive (the greatest common divisor of its coefficients is 1) and $(\diag(\textbf{d}) - A)\textbf{r} = 0, $ where $ A$ represents the adjacency matrix of $G$.
This work explores the combinatorial properties of the arithmetical structures associated with $F_n. $ Further, we discuss the arrow-star graph, a structure derived from the fan graph, along with its properties.  Additionally, we investigate the critical group linked to each such structure on $F_n. $
\end{abstract}

\begin{keywords}
Fan graphs, Laplacian, Arithmetical structures, Critical group    
\end{keywords}

\begin{AMS}
 11D72, 15B48, 11D45, 11B83, 11D68, 11C20   
\end{AMS}

\section{Introduction}
In recent years there has been a great deal of interest in studying the arithmetical structures on graphs. In this paper, we focus on investigating the arithmetical structures on Fan graphs.

Let $G= (V,E)$ be a finite and connected graph with $V$ is the vertex set and $E\subset V\times V$ represents the edge set. An arithmetical structure on $G$ is defined as a pair $({\bf d}, {\bf r})$, where ${\bf d}$ and ${\bf r}$ are positive integer vectors, ${\bf r}$ is primitive (the greatest common divisor of its coefficients is 1) and $$(\diag({\bf d}) - A){\bf r} = 0,$$ with $A$ representing the adjacency matrix of $G,$ where the adjacency matrix $A=[a_{ij}]$ associated with a graph is a symmetric matrix with $a_{ij}=1$ if $(i,j)\in E$, and $a_{ij}=0$ otherwise, for $i,j\in V$.

This definition extends the concept of the Laplacian arithmetical structure,  where ${\bf d}$ represents the vector of vertex degrees and ${\bf r} = {\bf 1} = (1, 1, \ldots, 1).$ Note that ${\bf d}$ and ${\bf r}$ uniquely determine each other, allowing us to regard, $\textbf{d}, \textbf{r},$ or the pair $({\bf d}, {\bf r})$ as an arithmetical structure on $G$. To avoid ambiguity, we will sometimes refer to these as arithmetical  $d$-structures or arithmetical $r$-structures  \cite{BHDNJC18}. The set of all arithmetical structures on $G$ is denoted Arith$(G)$ and sometimes by $\mathcal{A}(G)$. The data ($G, {\bf d}, {\bf r}$) together define an arithmetical graph. We denote the set of $\r$-structures of $G$ as $\A_{\r}(G).$

Arithmetical structures are deeply intertwined with number theory, algebraic geometry, and potential theory on graphs, serving as generalizations of concepts such as divisor theory on algebraic curves and abelian sandpile models \cite{biggs1997algebraic, HCEV18}. Their study intersects with areas like chip-firing games and the analysis of critical groups (or Jacobians) of graphs, which find applications in diverse fields, including network theory and statistical physics \cite{baker2007riemann,Corry2018DivisorsAS}.

Arithmetical structures on a graph, introduced by Lorenzini in \cite{LD89}, originate from intersection matrices in the study of degenerating curves in algebraic geometry. Here, the vertices of $G$ represent curve components, edges signify their intersections, and the entries of $\textbf{d}$ denote self-intersection numbers. These structures generalize the Laplacian matrix, encapsulating key graph properties.
Key concepts in this area that have garnered significant attention include the sandpile group and the chip-firing game (see \cite{CP18} for details). The critical group corresponds to the group of components of the Néron model of the Jacobian of the generic curve, an observation attributed by Lorenzini to Raynaud \cite{LD89, LP10}.

As in the classical case, the matrix $L(G, {\bf d}) = \diag({\bf d}) - A,$ where $A$ is the adjacency matrix of $G$, has rank $n - 1$, [\cite{LD89}, Proposition 1.1] with nullspace spanned by the all-ones vector $\bf{1}.$  The torsion  subgroup of the cokernel of $L$ forms the critical group of the arithmetical graph. Specifically, the critical group of an arithmetical structure corresponds to the torsion part of the cokernel of $L(G, {\bf d}) = \diag({\bf d}) - A,$ where $\diag({\bf d})$ is the diagonal matrix with the vector $\textbf{d}$ on the diagonal. In this setting, ${\bf r}$ generates the kernel of $L(G, {\bf d}).$ Further,  if we regard $L$ as a $\mathbb{Z}$-linear transformation $\mathbb{Z}^{n} \rightarrow \mathbb{Z}^{n}$, the cokernel $\mathbb{Z}^n / im L$ has the form $\mathbb{Z} \oplus \Phi(G)$; where $\Phi(G)$, the critical group is finite abelian, with cardinality equal to the number of spanning trees of $G$, by the Matrix-Tree Theorem. The critical group is also known as the sandpile group or the Jacobian. The elements of the critical group represent long-term behaviours of the well-studied abelian sandpile model on $G$; see, \cite{selforganizedcriticality1988, LP10,lionelleivnewhatisasandpile}.

The concept of a critical group first appeared in the literature in 1970 in the context of arithmetic geometry \cite{raynaud1970specialisation}. Later, Bak, Tang, and Wiesenfeld introduced the idea of self-organized criticality around 36 years ago \cite{selforganizedcriticality1988}, a concept widely studied in statistical physics and other domains. Its applications are detailed in P. Bak's book \cite{applicationofSelfOrganizedCriticality}.

In subsequent developments, D. Dhar's introduction of the abelian sandpile model \cite{dhar1990self} marked a paradigm shift. Informally described as a cellular automaton, its cells (vertices of a rooted graph) hold grains of sand. Dhar identified recurrent configurations in the model, forming a finite abelian group—later termed the sandpile group—with an order equal to the graph's number of spanning trees. This model has been extensively studied (e.g., \cite{AbelianAvalancesandTutttePolynomial,SandpileCobinatorialGameandCellularAutomata}).

Related concepts, such as chip-firing games, were explored by other authors under different frameworks (e.g., \cite{Chipfiringgamesongraph1991,NBiggsChipFiring,NBiggsTuttePolynomial}).

The group $\Phi(G)$ appears in the literature under various names, depending on its context of use:
\begin{enumerate}
\item Group of Components: Introduced in 1989 during a graph-theoretical study motivated by arithmetic geometry, following the terminology from the field \cite{lorenzini1991finite}.
\item Picard Group: Studied in 1997 by Bacher et al., linked to algebraic curves and shown to be isomorphic to the Jacobian group \cite{bacher1997lattice}.
\item Critical Group: Examined by Biggs in 1999 in the context of chip-firing games \cite{biggs1997algebraic}.
\item Smith Group: In 1990, associated with the Smith normal form of the adjacency matrix $A$ of $G$ \cite{rushanan1990combinatorial}.
\end{enumerate}


Recently, the critical groups of arithmetical structures on star graphs and complete graphs have been studied in \cite{arithmeticaloncompletegraphs}. Additionally, \cite{criticalpolynomialofgraphbyLorenzini} examines the critical polynomial of a graph, which is defined as the determinant of a specific matrix associated with a connected graph $G$. Building on these studies, this article investigates how every graph can be endowed with an arithmetical structure whose associated critical group is trivial. In \cite{BHDNJC18}, critical groups of arithmetical structures on path and cycles have been discussed. Critical groups of arithmetical structures under a generalized star-clique operation has been calculated in \cite{AAJL18}. 
The sandpile group of a planar graph is isomorphic to that of its dual, as shown in \cite{SandpileGroupofDualofGraph}. The sandpile group of the cone of a graph is studied in \cite{SandpileGroupofConeofGraph}. Additionally, literature establishes that every surjective uniform homomorphism of graphs induces an injective homomorphism between their sandpile groups and explores relationships between the sandpile group of the cone of the Cartesian product of graphs and that of their factors. Connections between the sandpile group of a graph $G$ and its line graph are also discussed in \cite{sandpileGroupofLineGraphs}. The sandpile group has been fully determined for certain families of graphs, including those discussed in \cite{sandpileGroupofThreshholdGraph, sandpileGroupofTree, SandpilegroupoverEllpiticCurves, sandpileGroupofCompleteMultipartiteGraph, sandpileGroupofRegularTrees}. Additionally, the sandpile group of the Cartesian product of graphs and hypercubes has been extensively studied in \cite{SandpileGroupofCratesianProduct1, SandpileGroupofCratesianProduct2, SandpileGroupofCratesianProduct3, SandpileGroupofCratesianProduct4, SandpileGroupofCratesianProduct5}.


In \cite{HCEV18}, arithmetical structures on complete graphs, paths, and cycles are studied, starting with a general perspective using $M$-matrices \cite{Berman1994andPlemmons}. The work reaffirms Lorenzini's result on the finiteness of arithmetical structures on a graph and shows that their count on a path corresponds to the Catalan numbers \cite{catalannumberbycarlitz}. Similarly, for cycles, \cite{BHDNJC18} proves that arithmetical structures are enumerated by the binomial coefficients $\binom{2n-1}{n-1}$ , and obtain refined enumeration results related to multisets.



Apart from the arithmetical structures on the path and cycle, in general the description of the arithmetical structures on a
graph is a very difficult problem. Therefore in \cite{BHDNJC18} it is studied the arithmetical structures on a graph with a cut vertex. For instance, for the complete and star graph \cite{arithmeticaloncompletegraphs}, its arithmetical structures
are in one to one correspondence with a variant of the Egyptian fractions  \cite{ Egyptianfraction2, Egyptianfraction3}. For example, the arithmetical $d$-structures on the star $K_{n,1}$ can be shown to be the positive integer solutions to the equation $d_0 = \sum_{i=1}^{n}\frac{1}{d_i}.$


It is known that Arith$(G)$ is finite for all connected graphs $G$ [\cite{LD89}, Lemma 1.6], though the proof relies on a non-constructive approach using Dickson’s Lemma. The finiteness result of arithmetical structure on connected graph as given by Lorenzini was recovered by Hugo Corrales and Carlos E. Valencia in \cite{HCEV18} from the perspective of M-matrices \cite{Berman1994andPlemmons}. This raises the challenge of explicitly enumerating arithmetical structures for specific graphs or families of graphs. Recent works \cite{BHDNJC18, KAAL20, PathwithDoubleedgeArithstructure} have addressed this for various graph families, including path graphs, cycle graphs, bidents, and certain path graphs with doubled edges. \cite{keyesReiter2021boundingthenumberofArithstructure} has studied certain upper bound for the number of arithmetical structures on $G$ depending only on the number of vertices and edges of $G$. Additionally, arithmetical structures on wheel graphs have also been studied in recent research \cite{BANBDCRBY2024}. Motivated by these developments, we investigate the arithmetical structures on the Fan graph $F_n$.

Fan graph (or Dutch windmill graph or n-fan) $F_n$ is a planar, 
undirected graph with $2n + 1$ vertices and $3n$ edges.
The Fan graph $F_n$ can be constructed by joining n copies of the cycle graph $C_3$ with a common vertex, which becomes a universal vertex for the graph. See figure \ref{fig: Friendship Graphs}.
\begin{figure}[hbt!]
\centering
\includegraphics[width=.8\textwidth]{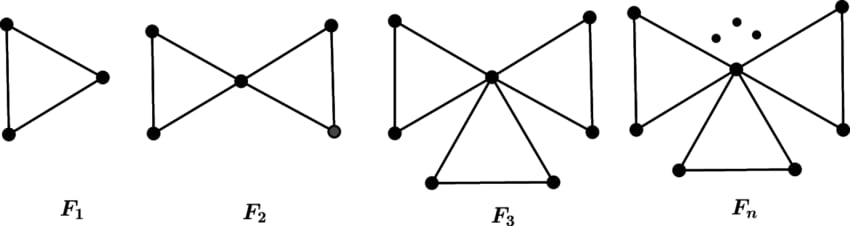}
\caption{ Fan Graphs}
\label{fig: Friendship Graphs}
\end{figure}

In this paper, we consider $F_n$ and study the arithmetical structures on Fan graph $F_n$.

The rest of the paper is organized as follows.
In section~2 we review essential definitions and results on matrices that are used throughout the paper. In section 3 an analysis of the arithmetical structures on the Fan graph $F_n$ is presented. Section 4 discusses the arrow-star graph, a structure derived from the fan graph, along with its properties. In section 5 the critical group of arithmetical structures on Fan graphs is described.

\section{Preliminaries}
In this section, we present some fundamental definitions and results that will be used throughout the paper.
\begin{definition}
A square matrix $A$ is called reducible if there exists a permutation matrix $P$ such that $$PAP^{t} = \begin{pmatrix}
    A_1 & * \\
    0 & A_2 \\
\end{pmatrix}$$ for some non-trivial square matrices $A_1$ and $A_2.$    
\end{definition}

\begin{definition} [Neighbourhood of a vertex v in a graph G] \cite{west2001introduction} 
 The neighbourhood of a vertex v in a graph G is the subgraph of G induced by all vertices adjacent to v, i.e., the graph composed of the vertices adjacent to v and all edges connecting vertices adjacent to v.  The neighbourhood is often denoted by $N_G(v)$.
\end{definition}

Here we recall the classical concept of an $M$-matrix and study a class of $M$-matrices whose proper principal minors are positive and its determinant is non-negative. Let us begin with some definitions: \cite{HCEV18}

\begin{definition}[Real non-negative matrix] \cite{HCEV18}
A real square matrix is called non-negative if all its entries are non-negative real numbers.    
\end{definition}

\begin{definition}[$Z$-matrix] \cite{HCEV18}
 A real matrix $ A = (a_{i,j}) \in \mathbb{R}^{n \times n}$ is called a Z-matrix if $a_{i,j} \leq 0 ,\ for \ all \ i \neq j$. 
\end{definition}

\begin{definition}[$M$-matrix] \cite{HCEV18}
 A $Z$-matrix $A$ is an $M$-matrix if there exists a non-negative matrix $N$ and a non-negative number $\alpha$ such that such that $A = \alpha I- N \ and \ \alpha \geq \rho(N)$, where $\rho(N) = max\{|\lambda|  : \lambda \in \sigma(N)\}$.   
\end{definition}
The study of $M$-matrices can be divided into two major parts: non-singular M-matrices and singular $M$-matrices (see[\cite{Berman1994andPlemmons}, Section 6.2 and 6.4])

\begin{definition}
 \cite{HCEV18} A real matrix $A =(a_{i,j})$ is called an almost non-singular $M$-matrix if $A$ is a $Z$-matrix, all its proper principal minors are positive and its determinant is non-negative.    
\end{definition}

\begin{theorem} \cite{HCEV18}\label{Theorem 2.6 from Corrales and Valencia}
 If $M$ is a real $Z$-matrix, then the following conditions are equivalent: 
 \begin{itemize}
     \item [(1)] $M$ is an almost non-singular $M$-matrix.
     \item [(2)] $M +D$ is a non-singular $M$matrix for any diagonal matrix $D>_{\neq} 0$. 
     \item [(3)] $\det(M) \geq 0$ and $\det(M +D) {\gneq} \det(M +D'){\gneq} 0$  for any diagonal matrices $D >_{\neq} D'{\gneq} 0.$  
 \end{itemize} 
\end{theorem}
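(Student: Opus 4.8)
The plan is to prove the equivalence as a cycle $(1)\Rightarrow(2)\Rightarrow(3)\Rightarrow(1)$, with the multilinear expansion of the determinant under a diagonal perturbation as the central computational device: for $D=\diag(d_1,\dots,d_n)$ and $[n]=\{1,\dots,n\}$,
\[
\det(M+D)=\sum_{S\subseteq[n]}\Big(\prod_{i\in S}d_i\Big)\det\big(M[\,[n]\setminus S\,]\big),
\]
where $M[T]$ is the principal submatrix of $M$ on $T$ and the empty minor is $1$. Adding a nonnegative diagonal leaves the off-diagonal entries untouched, so $M+D$ stays a $Z$-matrix and it suffices to control principal minors. I will use the classical fact that a $Z$-matrix is a non-singular $M$-matrix precisely when all its principal minors are positive, and that this is inherited by principal submatrices \cite{Berman1994andPlemmons}.

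For $(1)\Rightarrow(2)$, fix $D>_{\neq}0$ and an index set $V$, and expand $\det\big((M+D)[V]\big)$ by the formula above. Every minor that appears is either a proper principal minor of $M$, hence positive by (1), or $\det M\ge0$; so every term is nonnegative, and since either the $S=\varnothing$ term (when $V\neq[n]$) or some singleton term $d_j\det(M[\,[n]\setminus\{j\}\,])$ with $d_j>0$ (when $V=[n]$) is strictly positive, the minor is positive. Hence all principal minors of $M+D$ are positive and $M+D$ is a non-singular $M$-matrix. For $(2)\Rightarrow(3)$, continuity gives $\det M=\lim_{\varepsilon\to0^+}\det(M+\varepsilon I)\ge0$, each $M+\varepsilon I$ being a non-singular $M$-matrix; and for $D>_{\neq}D'\gneq0$ I write $M+D=(M+D')+(D-D')$ and expand, using that $M+D'$ is a non-singular $M$-matrix (all its minors positive) and that $D-D'\gneq0$ contributes a strictly positive singleton term, which gives both $\det(M+D')>0$ and $\det(M+D)>\det(M+D')$.

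The crux is $(3)\Rightarrow(1)$, and the only real difficulty is to upgrade nonnegativity of the proper principal minors to strict positivity in the degenerate case $\det M=0$. Writing $M=\alpha I-N$ with $N\ge0$ and $\alpha=\max_i M_{ii}$, the positivity clause of (3) yields $\det(M+\varepsilon I)=p_N(\alpha+\varepsilon)>0$ for all $\varepsilon>0$, where $p_N$ is the characteristic polynomial of $N$; since the Perron root $\rho(N)$ is the largest real root of $p_N$, this forces $\rho(N)\le\alpha$, so $M$ is an $M$-matrix. If $\det M>0$ then $\alpha>\rho(N)$, and for every proper $U$ one has $\rho(N[U])\le\rho(N)<\alpha$, so $\alpha I-N[U]$ is a non-singular $M$-matrix and $\det(M[U])>0$.

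The genuinely delicate case is $\det M=0$, i.e. $\alpha=\rho(N)$, and here I expect essentially all the content to lie. I claim (3) forces $N$ to be irreducible: otherwise, putting $N$ (hence $M$) in Frobenius block-triangular form, choosing a diagonal block $N_{l_0}$ with $\rho(N_{l_0})=\rho(N)=\alpha$, and taking $D\gneq0$ supported on the coordinates outside that block, $M+D$ remains block-triangular with $l_0$-block the singular matrix $\alpha I-N_{l_0}$, so $\det(M+D)=0$, contradicting the positivity clause of (3). Once $N$ is irreducible, the strict Perron–Frobenius monotonicity of the spectral radius under deletion of a coordinate gives $\rho(N[U])<\rho(N)=\alpha$ for every proper $U$, whence each proper minor $\det(M[U])=\det(\alpha I-N[U])$ is positive. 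Together with $\det M\ge0$ and the $Z$-matrix hypothesis this is exactly (1), closing the cycle; the remaining implications are bookkeeping with the expansion formula, so the irreducibility-forcing step and its interaction with Perron–Frobenius are the main obstacle.
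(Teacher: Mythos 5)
Your proposal is correct, but there is nothing in this paper to compare it against: Theorem~\ref{Theorem 2.6 from Corrales and Valencia} is stated as a quoted preliminary from \cite{HCEV18}, the paper supplies no proof of it, and it is used only as a black box (together with Theorem~\ref{Theorem 3.2 from Corrales and Valencia}) in the proof of Theorem~\ref{Theorem for unique Laplacian Arith Structure on fan graph}. Judged on its own merits, your cycle $(1)\Rightarrow(2)\Rightarrow(3)\Rightarrow(1)$ is sound. The expansion $\det(M+D)=\sum_{S\subseteq[n]}\prod_{i\in S}d_i\det(M[[n]\setminus S])$, applied to each principal submatrix, together with the Berman--Plemmons characterization of non-singular $M$-matrices by positivity of all principal minors, gives $(1)\Rightarrow(2)$ and $(2)\Rightarrow(3)$ exactly as you say; and your identification of the singular case $\det M=0$ of $(3)\Rightarrow(1)$ as the crux is accurate. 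The two ingredients you use there are both standard and correctly deployed: if $N$ were reducible, a diagonal perturbation supported off a Frobenius block attaining $\rho(N)=\alpha$ leaves $\det(M+D)=0$, contradicting the positivity $\det(M+D)>0$ that (3) yields for every $D>_{\neq}0$ (e.g.\ by comparing $D$ against $D/2$); and for irreducible nonnegative $N$ the strict inequality $\rho(N[U])<\rho(N)$ for proper $U$ makes every $M[U]=\alpha I-N[U]$ a non-singular $M$-matrix. That last step reproduces the classical fact that every proper principal submatrix of an irreducible singular $M$-matrix is a non-singular $M$-matrix (\cite{Berman1994andPlemmons}, Chapter 6), which is the standard route in the $M$-matrix literature that \cite{HCEV18} draws on; so your reconstruction is faithful to how this result is usually proved, and I find no gap in it.
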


\begin{theorem} \cite{HCEV18}\label{Theorem 3.2 from Corrales and Valencia}
Let $M$ be a $Z$-matrix. If there exists ${\bf r}$ with all its entries positive such that $M{\bf r}^{t} = 0^t,$ then $M$ is an $M$-matrix. Moreover, $M$ is an almost non-singular $M$-matrix with $\det(M) = 0$ if and only if $M$ is irreducible and there exists ${\bf r}$ with all its entries positive such that $M{\bf r}^t = 0^t. $   
\end{theorem}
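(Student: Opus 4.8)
The plan is to treat the two assertions separately, both via the shift that turns a $Z$-matrix into $\alpha I$ minus a non-negative matrix, followed by Perron--Frobenius theory. For the first claim, I would set $\alpha = \max_i M_{ii}$ and write $M = \alpha I - N$ with $N = \alpha I - M$. Since $M$ is a $Z$-matrix its off-diagonal entries are $\leq 0$, so those of $N$ are $\geq 0$, and the choice of $\alpha$ makes the diagonal of $N$ non-negative as well; hence $N \geq 0$. The hypothesis $M{\bf r}^t = 0^t$ rewrites as $N{\bf r}^t = \alpha {\bf r}^t$ with ${\bf r} > 0$. The key step is the standard fact that a non-negative matrix possessing a \emph{strictly positive} eigenvector has the corresponding eigenvalue equal to its spectral radius: pairing $N{\bf r}^t = \alpha {\bf r}^t$ on the left with a non-negative left Perron eigenvector $y$ of $N$ (satisfying $y^t N = \rho(N)\, y^t$) gives $\rho(N)\,\langle y, {\bf r}\rangle = \alpha\,\langle y, {\bf r}\rangle$, and $\langle y, {\bf r}\rangle > 0$ forces $\alpha = \rho(N)$. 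Thus $\alpha \geq \rho(N)$ and $M$ is an $M$-matrix.

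For the equivalence I would prove both implications. In the forward direction, suppose $M$ is an almost non-singular $M$-matrix with $\det(M)=0$. \emph{Irreducibility:} if $M$ were reducible, some permutation conjugate $PMP^t$ would be block upper-triangular with square diagonal blocks $A_1, A_2$, so that $0 = \det(M) = \det(A_1)\det(A_2)$; but $\det(A_1)$ and $\det(A_2)$ are proper principal minors of $M$, hence strictly positive by hypothesis, a contradiction. \emph{Positive kernel vector:} writing again $M = \alpha I - N$, the relation $\det(M)=0$ together with $\alpha \geq \rho(N)$ forces $\alpha = \rho(N)$, and irreducibility of $M$ makes $N$ irreducible; the Perron--Frobenius theorem for irreducible non-negative matrices then supplies a strictly positive eigenvector ${\bf r} > 0$ with $N{\bf r}^t = \rho(N){\bf r}^t$, i.e. $M{\bf r}^t = 0^t$.

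In the reverse direction, assume $M$ is irreducible with $M{\bf r}^t = 0^t$ for some ${\bf r} > 0$. The first claim already gives that $M$ is an $M$-matrix, and the existence of a non-trivial kernel vector gives $\det(M) = 0 \geq 0$. It remains to show every proper principal minor is positive. Writing $M = \rho(N) I - N$, I would consider a proper principal submatrix indexed by $S \subsetneq \{1,\dots,n\}$, which equals $\rho(N) I - N[S]$ with $N[S]$ the corresponding principal submatrix of $N$. The crucial input is that for an \emph{irreducible} non-negative matrix every proper principal submatrix has strictly smaller spectral radius, $\rho(N[S]) < \rho(N)$; consequently $\rho(N)I - N[S]$ is a non-singular $M$-matrix, and non-singular $M$-matrices have all principal minors positive, so in particular $\det(M[S]) > 0$. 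This yields that $M$ is almost non-singular with $\det(M)=0$.

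The main obstacle is the spectral-radius monotonicity step in the reverse direction, namely the strict inequality $\rho(N[S]) < \rho(N)$ for proper principal submatrices of an irreducible non-negative matrix, since this is exactly what upgrades ``non-negative determinant'' to ``\emph{positive} proper principal minors''; it rests on the strong form of Perron--Frobenius (simplicity of the Perron root and strict positivity of its eigenvector under irreducibility) rather than on the weaker statement used in the first claim. A secondary subtlety is the clean identification $\alpha = \rho(N)$ at each stage, which I would prefer to route through the left-eigenvector pairing above so as to avoid appealing to simplicity of eigenvalues where it is not needed.
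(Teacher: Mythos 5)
First, a point of reference: the paper does not prove this theorem at all; it is quoted in the Preliminaries directly from Corrales and Valencia \cite{HCEV18}, so there is no in-paper proof to compare your argument against. Judged on its own terms, your proof follows the standard Perron--Frobenius route (which is also the spirit of the cited source), and most of it is sound: the left-eigenvector pairing correctly shows that a strictly positive eigenvector of a non-negative matrix must belong to the spectral radius, the reducibility argument for the forward direction is right, and in the reverse direction the strict inequality $\rho(N[S])<\rho(N)$ for proper principal submatrices of an irreducible non-negative matrix is exactly the tool that upgrades non-negativity to strict positivity of the proper principal minors.

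The gap is in the forward direction of the equivalence, where you write that ``$\det(M)=0$ together with $\alpha \geq \rho(N)$ forces $\alpha=\rho(N)$.'' The inequality $\alpha\geq\rho(N)$ is precisely the assertion that $M$ is an $M$-matrix, and under the paper's definition an almost non-singular $M$-matrix is only assumed to be a $Z$-matrix with positive proper principal minors and non-negative determinant; that such a matrix is an $M$-matrix is not definitional, and it is exactly the substantive fact you are implicitly invoking. The step is true but needs an argument, for instance: all principal minors of $M$ are non-negative (the proper ones are positive by hypothesis, and $\det M\geq 0$), hence
\[
\det(tI+M)\;=\;t^{n}+\sum_{k=1}^{n}\Bigl(\textstyle\sum_{|S|=k}\det M[S]\Bigr)\,t^{\,n-k}\;>\;0
\qquad\text{for all } t>0,
\]
so $M$ has no negative real eigenvalue; since $\rho(N)$ is an eigenvalue of the non-negative matrix $N$, the real number $\alpha-\rho(N)$ is an eigenvalue of $M=\alpha I-N$, and therefore $\alpha-\rho(N)\geq 0$. (Alternatively, invoke the other preliminary quoted from \cite{HCEV18}, namely that $M$ being an almost non-singular $M$-matrix is equivalent to $M+D$ being a non-singular $M$-matrix for every diagonal $D>_{\neq}0$; take $D=\epsilon I$ and let $\epsilon\to 0$.) With this lemma supplied, your proof is complete.
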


\section{Arithmetical Structures on Fan Graphs}
In this section, we consider a fan graph $F_n$ and study the arithmetical structures on $F_n$.

We consider $F_n$ to be nontrivial when $n\geq 2.$ That is, $F_n$ is a fan graph with vertices   $v_0, v_1,  \ldots v_{2n} $,  where $v_0$ is the central vertex with degree $2n$ and $ v_1 \ldots, v_{2n}$ are the vertices of cyclic arms with degree $2$ each and $ v_{2n+1} = v_1$. Then the adjacency matrix $A(F_n)=[a_{v_i,v_j}]$ of order $2n+1$ is a symmetric matrix given by:
 \begin{equation}\label{eqn:adjmwn}
    a_{v_i,v_j}=\begin{cases}
        1, &  \mbox{if} \; i=0, j=1,\hdots, 2n \\
        1, & \mbox{if} \;  i\; is \; odd,\; j=i+1, 0 \\
        1,  & \mbox{if} \; i\; is \;  even,\; j=i-1, 0 \\
        0, & \,\, \mbox{otherwise}.
    \end{cases}.
\end{equation} 
Now, consider the adjacency matrix $A(F_n)$. Then we have the following result.
   
\begin{proposition}
Consider the Fan Graph $F_n$ in $2n+1$ vertices and $3n$
 edges. Let $\textbf{r}= (r_0, r_1, \ldots, r_{2n}) $ be a primitive positive integer vector. Then $\textbf{r}$  is an arithmetical $r$-structure on $F_n$ iff it satisfies the following conditions:
\begin{equation}\label{r-structures on F_n}
\begin{aligned}
 & r_0 | r_1 + r_2 + \cdots + r_{2n} \\
 & r_i | r_0 + r_{i+1}   \  if \  i \ is \ odd\\
 & r_i | r_0 + r_{i-1} 
 \  if \ i \ is \ even
\end{aligned}
\end{equation}
\end{proposition}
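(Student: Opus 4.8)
The plan is to convert the single matrix identity $(\diag(\mathbf{d}) - A)\mathbf{r} = 0$ into its $2n+1$ scalar rows and to observe that each row forces the value of one coordinate $d_i$ as a quotient whose integrality is exactly one of the three stated divisibility conditions. Recall from the preliminaries that $\mathbf{d}$ and $\mathbf{r}$ determine one another, so $\mathbf{r}$ is an arithmetical $r$-structure precisely when the vector $\mathbf{d}$ forced by this linear system happens to consist of positive integers. Thus the whole statement reduces to reading off the rows and checking when the forced $\mathbf{d}$ lands in $\mathbb{Z}_{>0}^{\,2n+1}$.

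First I would compute the rows of $(\diag(\mathbf{d}) - A)\mathbf{r}$ directly from the explicit adjacency matrix $A(F_n)$. The row indexed by the central vertex $v_0$, whose neighbours are all of $v_1,\ldots,v_{2n}$, gives
\[ d_0 r_0 = r_1 + r_2 + \cdots + r_{2n}. \]
For an arm vertex $v_i$ with $i$ odd, whose neighbours are $v_0$ and $v_{i+1}$, the corresponding row reads $d_i r_i = r_0 + r_{i+1}$; for $i$ even, with neighbours $v_0$ and $v_{i-1}$, it reads $d_i r_i = r_0 + r_{i-1}$. These are the only three shapes of equation that occur, matching the parity split in the statement.

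Next I would argue the two implications. For the forward direction, if $\mathbf{r}$ is an $r$-structure then by definition there is a positive integer vector $\mathbf{d}$ solving these equations; solving each row for $d_i$ as (neighbour sum)$/r_i$ and using $d_i \in \mathbb{Z}_{>0}$ shows that $r_i$ divides the corresponding neighbour sum, yielding precisely the displayed conditions. For the converse, given any primitive positive integer vector $\mathbf{r}$ satisfying the three divisibility relations, I would \emph{define} $\mathbf{d}$ by those same quotients; each $d_i$ is then a genuine positive integer because every numerator is a sum of strictly positive integers, and by construction $(\diag(\mathbf{d}) - A)\mathbf{r} = 0$, so $\mathbf{r}$ is an $r$-structure.

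This argument is essentially bookkeeping, so I do not expect a real obstacle. The only points needing care are correctly reading the neighbour sets from the parity-dependent definition of $A(F_n)$ and verifying strict positivity of each $d_i$, which is automatic since all $r_j > 0$ makes every numerator strictly positive. Primitivity of $\mathbf{r}$ plays no role in the divisibility equivalence itself; it is part of the standing definition of an $r$-structure and is simply carried along.
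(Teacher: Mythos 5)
Your proposal is correct and follows essentially the same route as the paper: the paper likewise observes that $(\diag(\mathbf{d}) - A(F_n))\mathbf{r} = 0$ unpacks row-by-row into $d_0 r_0 = r_1 + \cdots + r_{2n}$, $d_i r_i = r_0 + r_{i+1}$ ($i$ odd), $d_i r_i = r_0 + r_{i-1}$ ($i$ even), and that this system of equations with $d_i \in \mathbb{Z}_{>0}$ is equivalent to the stated divisibility conditions. Your write-up is merely more explicit about the two directions (solving for $d_i$ versus defining $d_i$ as the quotient and checking positivity), which the paper compresses into a one-line equivalence.
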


\begin{proof}
Note  that $(\diag(\textbf{d)} - A(F_n)){\bf r }= 0$ is equivalent to 
\begin{equation} 
  \begin{aligned}
 & d_0r_0 = r_1 + r_2 + \cdots + r_{2n} \\
 & d_ir_i = r_0 + r_{i+1}   \  if \  i \ is \ odd\\
 & d_ir_i = r_0 + r_{i-1} 
 \  if \ i \ is \ even
\end{aligned},
    \end{equation} which is equivalent to  the required divisibility conditions on $r$-structure given in (\ref{r-structures on F_n}). Hence, proved. 
\end{proof}

\begin{note}
If $r$-structure satisfies (\ref{r-structures on F_n}), then the corresponding $d$-structure can be derived from the definition of arithmetical structure on Fan graphs.
\end{note}

\begin{note}
The arithmetical structure on $F_n$ derived from the Laplacian of $F_n$ has ${\bf r}= (1, 1, \ldots, 1) $ and ${\bf d}= (2n, 2, \ldots, 2) $. We call this pair \textbf{(d,r)} the Laplacian arithmetical structure.
\end{note}

\begin{exam}
Consider graph $F_2.$ Then the following are some $r$-structures on $F_n$.
$$(1, 1, 1, 1, 1), (1, 1, 2, 1, 2), (1, 2, 3, 2, 3), (1, 1, 2, 1, 1), (1, 2, 3, 2, 1).$$
\end{exam}

\begin{note}
Alternatively, $\textbf{(d,r)}$ is an arithmetical structure on Fan graph $F_n$ iff label $r_i$ at vertex $v_i$ divides the sum of the labels at adjacent vertices of $v_i$, i.e., the neighbouring vertices of $v_i$. 
\end{note}

Before moving to next result we will fix some notations. If $\textbf{d} , \textbf{a} \in $ $\mathbb{R}^V $ then, we say that $\textbf{d} \leq \textbf{a}$ if and only if $\textbf{d}_v \leq \textbf{a}_v$ $\forall \, v \in V$, where $\leq$ is a partial order in $\mathbb{R}^V$.
For $\textbf{d} , \textbf{a} \in $ $\mathbb{R}^V $  we say that $\textbf{d} < \textbf{a}$ if and only if $\textbf{d} \leq \textbf{a}$ and $\textbf{d} \neq \textbf{a}$. Now, we have the following theorem:

\begin{theorem} \label{Theorem for unique Laplacian Arith Structure on fan graph}
Let $F_{n}$ be the Fan graph and $\textbf{(d,r)}\in {\bf A}(F_n).$  If ${\bf r} \neq {\bf 1}_{2n+1}^t$ then one of the following holds:
 \begin{enumerate}
    \item $ d_0> 2n$ and $d_i<2,$ for some $i>0$.
    \item $d_0<2n$ and $d_i > 2$, for some $i>0$.
    \item $d_0=2n$ and $d_i<2$, $d_j>2,$ for some $0< i,j\le 2n.$
\end{enumerate}
If $d_i=1$ for some $i$,  then $\textbf{d} _u >1$  for all $u \in N_{F_{n}}(v_i)$, where $N_{F_{n}}(v_i)$ is the neighbourhood of $i$th vertex $v_i$ in $F_{n}$.
\end{theorem}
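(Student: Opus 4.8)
The plan is to reduce the entire statement to one structural fact: the Laplacian degree vector $\mathbf{d}_{\mathrm{Lap}} = (2n, 2, \ldots, 2)$ is \emph{incomparable}, in the componentwise partial order, to the degree vector of any \emph{other} arithmetical structure on $F_n$. First I would record the elementary equivalence that $\mathbf{r} = \mathbf{1}$ if and only if $\mathbf{d} = \mathbf{d}_{\mathrm{Lap}}$: substituting $\mathbf{r} = \mathbf{1}$ into the structure equations forces $d_0 = 2n$ and $d_i = 2$ for $i>0$, while conversely the kernel of $\diag(\mathbf{d}_{\mathrm{Lap}}) - A$ is one-dimensional (the rank is $2n$ since $F_n$ is connected) and spanned by $\mathbf{1}$, so its only primitive positive generator is $\mathbf{1}$. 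Hence the hypothesis $\mathbf{r} \neq \mathbf{1}$ is equivalent to $\mathbf{d} \neq \mathbf{d}_{\mathrm{Lap}}$.

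The core step is incomparability. Set $L = \diag(\mathbf{d}) - A$ and $L_{\mathrm{Lap}} = \diag(\mathbf{d}_{\mathrm{Lap}}) - A$. Both are $Z$-matrices, both are irreducible because $F_n$ is connected, and each admits a positive kernel vector ($\mathbf{r}$ and $\mathbf{1}$ respectively); so by Theorem~\ref{Theorem 3.2 from Corrales and Valencia} both are almost non-singular $M$-matrices with vanishing determinant. Suppose $\mathbf{d} \ge \mathbf{d}_{\mathrm{Lap}}$ with $\mathbf{d} \neq \mathbf{d}_{\mathrm{Lap}}$. Then $D = \diag(\mathbf{d} - \mathbf{d}_{\mathrm{Lap}}) >_{\neq} 0$, and choosing the auxiliary $D' = \tfrac12 D \gneq 0$, Theorem~\ref{Theorem 2.6 from Corrales and Valencia}(3) applied to $L_{\mathrm{Lap}}$ gives $\det(L) = \det(L_{\mathrm{Lap}} + D) > \det(L_{\mathrm{Lap}} + D') > 0$, contradicting $\det(L) = 0$. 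The case $\mathbf{d} \le \mathbf{d}_{\mathrm{Lap}}$ is symmetric: apply the same theorem to $L$ with $D = \diag(\mathbf{d}_{\mathrm{Lap}} - \mathbf{d})$, noting $L + D = L_{\mathrm{Lap}}$ has determinant $0$. Thus neither $\mathbf{d} \ge \mathbf{d}_{\mathrm{Lap}}$ nor $\mathbf{d} \le \mathbf{d}_{\mathrm{Lap}}$ can hold.

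Incomparability means there is an index where $\mathbf{d}$ strictly exceeds $\mathbf{d}_{\mathrm{Lap}}$ and an index where it falls strictly below. I would then split on $d_0$ against $2n$: if $d_0 > 2n$, the strictly-below index must lie in $\{1,\dots,2n\}$, which is case~(1); if $d_0 < 2n$, the strictly-above index lies in $\{1,\dots,2n\}$, which is case~(2); and if $d_0 = 2n$, both indices lie in $\{1,\dots,2n\}$, giving case~(3). For the final assertion I would argue directly from the neighbour-sum form $d_v r_v = \sum_{u \sim v} r_u$. Since every vertex of $F_n$ has degree at least $2$, if $d_v = 1$ then $r_v = \sum_{u \sim v} r_u$ strictly exceeds each individual neighbour label (the remaining labels are positive). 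Hence for any neighbour $w$ of $v$, the label $r_v$ occurs in $w$'s neighbour sum, so $d_w r_w = \sum_{u \sim w} r_u \ge r_v > r_w$, forcing $d_w > 1$, as required.

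The main obstacle is the incomparability step: one must invoke the determinant monotonicity of almost non-singular $M$-matrices (Theorem~\ref{Theorem 2.6 from Corrales and Valencia}(3)) and handle the strict versus non-strict matrix inequalities with care. In particular, the auxiliary choice $D' = \tfrac12 D$ is precisely what allows the theorem to be applied when $D = \diag(\mathbf{d} - \mathbf{d}_{\mathrm{Lap}})$ has some zero diagonal entries; everything after that is a short case analysis or an elementary positivity argument.
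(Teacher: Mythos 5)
Your proof is correct, and it splits into two halves that relate differently to the paper's argument. For the trichotomy, you follow essentially the same route as the paper---both arguments reduce to the incomparability of $\mathbf{d}$ with the Laplacian vector $(2n,2,\ldots,2)$ via Theorem \ref{Theorem 3.2 from Corrales and Valencia} and Theorem \ref{Theorem 2.6 from Corrales and Valencia}---but you supply two details the paper leaves implicit: the reduction of the hypothesis $\mathbf{r}\neq\mathbf{1}$ to $\mathbf{d}\neq\mathbf{d}_{\mathrm{Lap}}$ (genuinely needed, since incomparability is only meaningful between distinct vectors, and the paper's hypothesis is stated on $\mathbf{r}$), and the explicit determinant-monotonicity contradiction showing $\det(L)>0$. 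A small simplification is available here: part (2) of Theorem \ref{Theorem 2.6 from Corrales and Valencia} already gives that $L_{\mathrm{Lap}}+D$ is a non-singular $M$-matrix for any diagonal $D>_{\neq}0$, so the auxiliary choice $D'=\tfrac12 D$ and part (3) are not needed. For the final assertion about $d_i=1$, your argument is genuinely different from, and more general than, the paper's: the paper verifies the claim by a three-case analysis tied to the fan graph's recurrences (central vertex paired with an odd or even arm vertex, then two adjacent arm vertices), each case ending in a contradiction such as $r_0=0$, whereas you observe that in any graph of minimum degree $2$, the equation $d_vr_v=\sum_{u\sim v}r_u$ with $d_v=1$ forces $r_v$ to strictly dominate each neighbour's label, whence $d_wr_w=\sum_{u\sim w}r_u\geq r_v>r_w$ and so $d_w>1$ for every neighbour $w$ of $v$. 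Your version eliminates the parity bookkeeping entirely and proves the statement for every graph of minimum degree at least two, not just $F_n$, at no extra cost.
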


\begin{proof} 
Let $F_{n}$ be a Fan graph with vertices   $v_0, v_1, v_2,  \ldots v_{2n} $,   where $v_0$ is the central vertex with degree $2n$ and $v_1, v_2 \ldots, v_n$ are the vertices of cycle with degree $2$ each. Since $\textbf{(d,r)} \neq ((2n, 2{\bf 1}_{2n}^t), {\bf 1}_{2n+1}^t) \in Arith (F_{n})$ and $0 < \textbf{d}$, by using Theorem (\ref{Theorem 2.6 from Corrales and Valencia}) and Theorem (\ref{Theorem 3.2 from Corrales and Valencia}), we get that neither $ ((2n),2,2,2, \cdots,2) \geq \textbf{d}$ nor  $ \textbf{d}\geq((2n),2,2,2, \cdots,2)$. From this we conclude that  one of the following holds: 
  \begin{enumerate}
    \item $ d_0> 2n$ and $d_i<2,$ for some $i>0$.
    \item $d_0<2n$ and $d_i > 2$, for some $i>0$.
    \item $d_0=2n$ and $d_i<2$, $d_j>2,$ for some $0< i,j\le 2n.$
\end{enumerate}
  Now as $L(F_{n},\textbf{d}) \textbf{r}^t = \textbf{0}^t$, we get 
  \begin{equation} \label{nmt1}
  \begin{aligned}
 & d_0r_0 = r_1 + r_2 + \cdots + r_{2n} \\
 & d_ir_i = r_0 + r_{i+1}   \  if \  i \ is \ odd\\
 & d_ir_i = r_0 + r_{i-1} 
 \  if \ i \ is \ even
\end{aligned}
    \end{equation}
Let $d_i=1$ for some $i\ge 0$. Consider equation $(\ref{nmt1})$.\\

{\bf Case I: } Suppose that $i=0$ and $d_j=1$, for some odd $j$. Then using (\ref{nmt1}) we have $r_0 =r_1+r_2+\cdots+r_{2n}$ and $r_j=r_0+r_{j+1}$. This implies that   $r_0=r_1+r_2+\cdots+r_{j-1}+r_0+ r_{j+1}+\cdots+ r_{2n}$.  It implies that $r_1+r_2+\cdots+ r_{j-1}+ 2r_{j+1}+\cdots+ r_{2n}=0$. Which is a contradiction to the fact that all the $r_i's$ are positive integers.  Similarly, 
suppose that $i=0$ and $d_j=1$, for some even $j$. Then using (\ref{nmt1}) we have $r_0 =r_1+r_2+\cdots+r_{2n}$ and $r_j=r_0+r_{j-1}$. This implies that   $r_0=r_1+r_2+\cdots+r_{j-1}+r_0+ r_{j-1}+ r_{j+1}+\cdots+ r_{2n}$.  It implies that $r_1+r_2+\cdots+ 2r_{j-1}+ r_{j+1}+\cdots+ r_{2n}=0$. Which is a contradiction to the fact that all the $r_i's$ are positive integers. \\

{\bf Case II: } Suppose that $i$ is even, $i>0$ and $d_{i-1}=1$. Then by (\ref{nmt1}) we get $r_{i} = r_0+r_{i-1}$. Again, $r_{i-1} = r_0+r_{i}$. Therefore, we have $r_i = 2r_0 + r_i$. It implies that $r_0 = 0.$ which is also a contradiction.\\

{\bf Case III: } Suppose that $i$ is odd and $d_{i+1}=1$. Then by (\ref{nmt1}) we get $r_{i} = r_0+r_{i+1}$. Again, $r_{i+1} = r_0+r_{i}$. Therefore, we have $r_i = 2r_0 + r_i$. It implies that $r_0 = 0.$ which is also a contradiction.  
\end{proof}

\begin{corollary}\label{AS1}
There is exactly one arithmetical structure $(\textbf{d}, \textbf{r})$ on $F_n$ such that $d_0\ge 2n$, $d_i \geq 2$ for all $i>1$, namely $$\textbf{d} = (2n, 2, 2, \ldots, 2) \text{     and    }  \textbf{r} = (1, 1, 1, \ldots, 1). $$ 
\end{corollary}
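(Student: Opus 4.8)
The plan is to derive the corollary directly from Theorem \ref{Theorem for unique Laplacian Arith Structure on fan graph} via its contrapositive, followed by a back-substitution into the defining equations (\ref{nmt1}). There are three things to establish: that the Laplacian pair is a structure of the stated type, that it is the only one, and that any structure of this type must have $\mathbf{d} = (2n, 2, \ldots, 2)$.

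First I would dispose of existence. Substituting $\mathbf{r} = \mathbf{1}_{2n+1}$ into (\ref{nmt1}) gives $d_0 = r_1 + \cdots + r_{2n} = 2n$ and $d_i r_i = r_0 + r_{i\pm 1} = 2$, so $d_i = 2$ for each $i>0$; since $\mathbf{1}$ is primitive, the pair $\mathbf{d} = (2n,2,\ldots,2)$, $\mathbf{r} = \mathbf{1}_{2n+1}$ is an arithmetical structure. It plainly satisfies $d_0 \geq 2n$ and $d_i \geq 2$, so at least one structure of the required type exists.

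Next I would handle uniqueness by contradiction. Suppose $(\mathbf{d}, \mathbf{r})$ is an arithmetical structure on $F_n$ with $d_0 \geq 2n$ and $d_i \geq 2$ for every $i>0$, and assume $\mathbf{r} \neq \mathbf{1}_{2n+1}$. Then Theorem \ref{Theorem for unique Laplacian Arith Structure on fan graph} forces one of its three alternatives to hold. But each alternative asserts a strict inequality that our hypothesis forbids: alternatives (1) and (3) both require $d_i < 2$ for some $i>0$, contradicting $d_i \geq 2$, while alternative (2) requires $d_0 < 2n$, contradicting $d_0 \geq 2n$. Since all three cases are excluded, we must have $\mathbf{r} = \mathbf{1}_{2n+1}$. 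Feeding $\mathbf{r} = \mathbf{1}$ back into (\ref{nmt1}) then forces $d_0 = 2n$ and $d_i = 2$ for all $i > 0$ exactly as in the existence step, so $(\mathbf{d}, \mathbf{r})$ coincides with the Laplacian structure.

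I do not expect a substantial obstacle, since the corollary is essentially a repackaging of the preceding theorem. The only point requiring care is verifying that the hypotheses $d_0 \geq 2n$ and $d_i \geq 2$ negate \emph{precisely} the strict inequalities $d_0 < 2n$ and $d_i < 2$ appearing in each of the three cases, so that no case can survive when $\mathbf{r} \neq \mathbf{1}$; this is what makes the contrapositive of Theorem \ref{Theorem for unique Laplacian Arith Structure on fan graph} close the argument cleanly.
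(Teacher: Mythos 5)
Your proposal is correct and takes essentially the same route as the paper: the paper's proof is a one-line citation of Theorem~\ref{Theorem for unique Laplacian Arith Structure on fan graph}, and your argument merely fills in the case-by-case exclusion of its three alternatives plus the back-substitution of $\mathbf{r}=\mathbf{1}$ into (\ref{nmt1}), which is exactly what that citation leaves implicit. One remark: you (rightly) read the hypothesis as $d_i \geq 2$ for all $i>0$ rather than the literal ``$i>1$'' in the statement; the latter must be a typo, since otherwise the structure $(\mathbf{d},\mathbf{r})=((7,1,2,2,2),(1,3,2,1,1))$ on $F_2$ would satisfy the stated hypotheses and contradict uniqueness.
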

\begin{proof}
The proof follows as the consequence of Theorem \ref{Theorem for unique Laplacian Arith Structure on fan graph}.
\end{proof}

The following propositions provide a class of elements of $\A(F_n).$

\begin{proposition}
There is an arithmetical structure $(\textbf{d}, \textbf{r})$ on $F_n$ such that $\textbf{r} = (2n, 1, 1, \ldots, 1). $.    
\end{proposition}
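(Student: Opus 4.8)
The plan is to verify directly that $\mathbf{r} = (2n, 1, 1, \ldots, 1)$ satisfies the divisibility characterization established in the earlier Proposition, namely the conditions in~(\ref{r-structures on F_n}), and then to recover the accompanying $\mathbf{d}$ to confirm that it consists of positive integers. Here one reads off $r_0 = 2n$ and $r_i = 1$ for all $1 \le i \le 2n$, so the vector has the required $2n+1$ entries and is primitive, since the presence of the coordinates equal to $1$ forces $\gcd(2n, 1, \ldots, 1) = 1$.

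For the first divisibility condition I would compute $r_1 + r_2 + \cdots + r_{2n} = 2n$, which is exactly $r_0$; hence $r_0 \mid r_1 + \cdots + r_{2n}$ holds trivially. For the remaining two families of conditions, $r_i \mid r_0 + r_{i+1}$ for odd $i$ and $r_i \mid r_0 + r_{i-1}$ for even $i$, I would simply observe that each $r_i = 1$ for $i \ge 1$, and $1$ divides every integer; thus both conditions are automatically satisfied regardless of the value of $r_0 + r_{i\pm 1} = 2n+1$.

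It then remains to check that the induced $\mathbf{d}$ is a positive integer vector, which is part of the definition of an arithmetical structure. Solving $d_0 r_0 = \sum_{i \ge 1} r_i$ gives $d_0 = 1$, while $d_i r_i = r_0 + r_{i \pm 1}$ gives $d_i = 2n+1$ for every $i \ge 1$; all of these are positive integers, so $\mathbf{d} = (1, 2n+1, \ldots, 2n+1)$ is a legitimate $d$-structure. I do not anticipate a genuine obstacle: the statement is an existence claim settled by exhibiting this explicit pair. The only points requiring a moment's care are confirming primitivity of $\mathbf{r}$ and the positivity and integrality of $\mathbf{d}$, both of which are immediate from the computation above.
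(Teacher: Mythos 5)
Your proof is correct and follows essentially the same route as the paper, which simply notes that $\mathbf{r}=(2n,1,\ldots,1)$ satisfies the divisibility conditions in~(\ref{r-structures on F_n}); you have merely spelled out the verification (including primitivity and the recovered vector $\mathbf{d}=(1,2n+1,\ldots,2n+1)$) that the paper leaves implicit.
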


\begin{proof}
Since $\textbf{r}$ satisfies (\ref{r-structures on F_n}), we have thus produced an arithmetical structure on $F_n$.    
\end{proof}

Let $S_{2n+1}$ be the group of permutations on $2n+1$ symbols $\{0,1 \ldots, 2n\}.$
Let $B_1=\{(l,m)(l+1,m+1)\in S_{2n+1}: l, m \;\text{are  odd} \} $ and $B_2=\{(l,l+1)\in S_{2n+1}:   l \;\text{is  odd }\}$. Let $H$ be the subgroup of $S_{2n+1}$ generated by $B_1\cup B_2.$  Denote the set of all bijective functions from $\mathcal{A}_r(F_n)$ to itself by $Iso(\mathcal{A}_r(F_n))$.
Define  $\rho: H\to Iso(\mathcal{A}_r(F_n))$ by 
$$\rho( \sigma)(r_0, r_1, \cdots, r_{2n})=(r_0, r_{\sigma(1)}, \cdots, r_{\sigma(2n)}).$$
Then $\rho$ defines group homomorphism from $H$ to $Iso(\mathcal{A}_r(F_n))$. Hence this defines an action of $H$ on $\mathcal{A}_r(F_n)$.


Now, we describe the $\rho$ actions of cyclic group $\mathbb{Z}_n$ on the set of arithmetical r-structure $\mathcal{A}_r(F_n)$ of $F_n$. For $c \in \  \mathbb{Z}_n$ we define 
 $$ \rho _c(r_0, r_1, \cdots, r_{2n}) = (r_0, r_{2c+1},\cdots,r_{2n},r_1,\cdots,r_{2c}) =(r_0,r_{2c\oplus 1},r_{2c\oplus 2}, \hdots, r_{2c\oplus 2n}),$$ where $\oplus$ is modulo $n$ addition for any $c\in \mathbb{Z}_n$  and $ (r_0, r_1, \cdots, r_{2n}) \in \mathcal{A}_r(F_n)$.  Denote the set of all bijective functions from $\mathcal{A}_r(F_n)$ to itself by $Iso(\mathcal{A}_r(F_n))$. Then $\rho_c$ defines group homomorphism from $\mathbb{Z}_n$ to $Iso(\mathcal{A}_r(F_n))$.

For example, $\rho$-orbit of arithmetical r-structure (1, 2, 3, 3, 2) on $F_2$ is given by: $(1, 2, 3, 3,2),(1, 3, 2, 2,3).$




The following theorem outlines how  arithmetical structures on 
$F_{n+1}$ are derived from those on $F_{n}$ under certain conditions.

\begin{theorem}
 Let $\textbf{(d,r})$ be an arithmetical  structure on $F_n$ such that $r_0$ divides $r_{2m-1} + r_{2m}$, for some $1\leq m\leq n$. Let $(\tilde{\textbf{d}},\tilde{\textbf{r}})$ be given by 
$$
\tilde{\textbf{d}} = \begin{pmatrix}
    d_0 + \frac{r_{2m-1} + r_{2m}}{r_0} \\ d_1 \\ \vdots \\ d_{2n} \\ d _{2m-1} \\ d_{2m}
\end{pmatrix} \,\,\,\,\,
\text{  and   } \,\,\, 
\tilde{\textbf{r}} = \begin{pmatrix}
    r_0\\ r_1\\ \vdots \\ r_{2n}\\ r_{2m-1}\\ r_{2m}
\end{pmatrix}.
$$ Then $ (\tilde{\textbf{d}},\tilde{\textbf{r}}) \in Arith (F_{n+1}).$
\end{theorem}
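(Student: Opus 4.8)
The plan is to verify directly that $(\tilde{\mathbf{d}}, \tilde{\mathbf{r}})$ meets the three defining requirements of an arithmetical structure on $F_{n+1}$: that $\tilde{\mathbf{r}}$ is a primitive positive integer vector, that $\tilde{\mathbf{d}}$ is a positive integer vector, and that $(\diag(\tilde{\mathbf{d}}) - \tilde{A})\tilde{\mathbf{r}} = \mathbf{0}$, where $\tilde{A} = A(F_{n+1})$. The guiding idea is that passing from $F_n$ to $F_{n+1}$ simply appends one more triangular blade at the central vertex $v_0$, and we populate this new blade with a copy of the $r$- and $d$-labels of the existing $m$-th blade; the hypothesis $r_0 \mid r_{2m-1} + r_{2m}$ is exactly what keeps the updated central entry integral.

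First I would fix the bookkeeping. Writing $\tilde{\mathbf{r}} = (\tilde r_0, \ldots, \tilde r_{2n+2})$ and $\tilde{\mathbf{d}} = (\tilde d_0, \ldots, \tilde d_{2n+2})$, we have $\tilde r_i = r_i$ and $\tilde d_i = d_i$ for $0 \le i \le 2n$, while the two adjoined vertices carry $\tilde r_{2n+1} = r_{2m-1}$, $\tilde r_{2n+2} = r_{2m}$ and $\tilde d_{2n+1} = d_{2m-1}$, $\tilde d_{2n+2} = d_{2m}$, and finally $\tilde d_0 = d_0 + (r_{2m-1}+r_{2m})/r_0$. Primitivity is immediate: since $\tilde{\mathbf{r}}$ contains every entry of the primitive vector $\mathbf{r}$, its coordinate gcd divides $1$. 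Positivity and integrality of $\tilde{\mathbf{d}}$ reduces to the single new coordinate $\tilde d_0$, which is a positive integer precisely because $r_0 \mid r_{2m-1}+r_{2m}$ by hypothesis; the remaining coordinates are positive integers inherited from $\mathbf{d}$.

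The substance is the verification of $(\diag(\tilde{\mathbf{d}}) - \tilde A)\tilde{\mathbf{r}} = \mathbf{0}$, which I would check in three groups of rows using the equivalent divisibility/equation form behind (\ref{r-structures on F_n}). For each of the $2n$ original blade rows $1 \le i \le 2n$, the neighbourhood of $\tilde v_i$ in $F_{n+1}$ coincides with that of $v_i$ in $F_n$ (its blade-partner together with $v_0$), and all relevant labels are unchanged, so these rows hold verbatim from $(\mathbf{d},\mathbf{r}) \in \text{Arith}(F_n)$. For the two new rows, the odd vertex gives $\tilde d_{2n+1}\tilde r_{2n+1} = \tilde r_0 + \tilde r_{2n+2}$, i.e. $d_{2m-1}r_{2m-1} = r_0 + r_{2m}$, which is exactly the $m$-th blade's odd equation in $F_n$; the even vertex similarly reproduces $d_{2m}r_{2m} = r_0 + r_{2m-1}$. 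Finally the central row reads $\tilde d_0 \tilde r_0 = \sum_{i=1}^{2n+2}\tilde r_i$; expanding the left side as $d_0 r_0 + (r_{2m-1}+r_{2m})$ and substituting $d_0 r_0 = \sum_{i=1}^{2n} r_i$ shows both sides equal $\sum_{i=1}^{2n} r_i + r_{2m-1} + r_{2m}$.

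The only genuine care-point, rather than a deep obstacle, is fixing the vertex ordering of $F_{n+1}$ so that the adjacency matrix $\tilde A$ is exactly the one prescribed by (\ref{eqn:adjmwn}) — in particular ensuring the new pair is adjoined with the odd/even parity matching that convention, so that the new odd vertex is the neighbour of the new even vertex and both attach to $v_0$. Once this ordering is pinned down, every row of $(\diag(\tilde{\mathbf{d}}) - \tilde A)\tilde{\mathbf{r}}$ collapses either to an identity copied from $\text{Arith}(F_n)$ or to the central balance computed above, and the three requirements are established, giving $(\tilde{\mathbf{d}},\tilde{\mathbf{r}}) \in \text{Arith}(F_{n+1})$.
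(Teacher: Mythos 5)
Your proposal is correct and follows essentially the same route as the paper: a direct verification that $(\diag(\tilde{\mathbf{d}}) - A(F_{n+1}))\tilde{\mathbf{r}} = \mathbf{0}$, which the paper organizes as a block decomposition $\tilde{A} = \begin{pmatrix} A(F_n) & P_{12} \\ P_{21} & P_{22} \end{pmatrix}$ and then leaves the row computations implicit, while you carry them out explicitly row by row. Your write-up is in fact more complete than the paper's, since you also check primitivity of $\tilde{\mathbf{r}}$ and the integrality and positivity of $\tilde{d}_0$, points the paper's proof passes over in silence.
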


\begin{proof} Let $A(F_n)$ be the adjacency matrix of a fan graph $F_n$.
    Observe that  $$ \tilde{A} = A(F_{n+1}) = \begin{pmatrix}
        A(F_n) & P_{12} \\
        P_{21} & P_{22}
    \end{pmatrix},$$
    where $$P_{21} = \begin{pmatrix}
        1 & 0 & \cdots & 0\\
        1 & 0 & \cdots & 0
    \end{pmatrix} _{2 \times n}, \,\,\,\, P_{12} = P_{21}^t \,\,\,
    \text{         and      }
     P_{22} = \begin{pmatrix}
         0 & 1\\ 
         1 & 0
    \end{pmatrix}.$$
    Then we have $$(\diag (\tilde{\textbf{d}}) - \tilde{A}) \tilde{\textbf {r}} = 0.$$ Hence proved.
\end{proof}

\begin{corollary}
 Let $\textbf{(d,r})$ be an arithmetical  structure on $F_n$ such that $r_0$ divides $r_{2n-1} + r_{2n}$. Let $(\tilde{\textbf{d}},\tilde{\textbf{r}})$ be given by 
$$
\tilde{\textbf{d}} = \begin{pmatrix}
    d_0 + \frac{r_{2n-1} + r_{2n}}{r_0} \\ d_1 \\ \vdots \\ d_{2n} \\ d _{2n-1} \\ d_{2n}
\end{pmatrix} \,\,\,\,\,
\text{  and   } \,\,\, 
\tilde{\textbf{r}} = \begin{pmatrix}
    r_0\\ r_1\\ \vdots \\ r_{2n}\\ r_{2n-1}\\ r_{2n}
\end{pmatrix}.
$$ Then $ (\tilde{\textbf{d}},\tilde{\textbf{r}}) \in Arith (F_{n+1}).$
\end{corollary}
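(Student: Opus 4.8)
The plan is to recognize that this corollary is precisely the $m = n$ instance of the preceding theorem, so that no independent argument is needed: specializing $m = n$ turns the divisibility hypothesis ``$r_0 \mid r_{2m-1} + r_{2m}$ for some $1 \le m \le n$'' into ``$r_0 \mid r_{2n-1} + r_{2n}$,'' and the constructed vectors $\tilde{\textbf{d}}$ and $\tilde{\textbf{r}}$ then coincide verbatim with those displayed in the corollary. Invoking the theorem gives $(\tilde{\textbf{d}}, \tilde{\textbf{r}}) \in \mathrm{Arith}(F_{n+1})$ immediately.

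Should one wish to verify the claim directly rather than cite the theorem, the plan is the block-matrix computation already indicated in the theorem's proof. Writing $\tilde{A} = A(F_{n+1})$ in the block form with $A(F_n)$ in the top-left corner, the first $2n$ arm rows of $(\diag(\tilde{\textbf{d}}) - \tilde{A})\tilde{\textbf{r}}$ are unaffected by the two appended coordinates, since the new vertices are adjacent only to $v_0$ and to each other; those equations therefore hold exactly as in the original structure. The two freshly appended rows encode the new triangle arm, whose $r$-values are the copies $r_{2n-1}, r_{2n}$ of the $n$-th arm and whose diagonal entries are the copies $d_{2n-1}, d_{2n}$. Consequently these two equations are literally the defining equations of the $n$-th arm of $(\textbf{d}, \textbf{r})$, and are satisfied for free.

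The only row that genuinely changes is the central row for $v_0$, which now sums over $2n + 2$ neighbours. Its equation reads $\tilde{d}_0 r_0 = (r_1 + \cdots + r_{2n}) + (r_{2n-1} + r_{2n})$, and comparison with $d_0 r_0 = r_1 + \cdots + r_{2n}$ forces $\tilde{d}_0 = d_0 + (r_{2n-1} + r_{2n})/r_0$, which is exactly the stated value. The sole point requiring care — and the reason the divisibility hypothesis is imposed — is that $\tilde{d}_0$ must be a positive integer, and this is guaranteed precisely by $r_0 \mid r_{2n-1} + r_{2n}$. Positivity and primitivity of $\tilde{\textbf{r}}$ are inherited from $\textbf{r}$, since no coordinate is removed and the appended entries merely duplicate existing ones. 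I therefore expect no real obstacle: the whole content is the specialization $m = n$, with the integrality of $\tilde{d}_0$ being the single condition to record.
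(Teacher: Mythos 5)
Your proposal is correct and matches the paper's approach: the corollary is stated immediately after the theorem precisely as its $m=n$ specialization, and your fallback block-matrix verification is the same computation the paper uses to prove that theorem. Nothing further is needed.
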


We observe that the Fan graph $F_n$ consists of $n$ cycle graphs $C_3$, each referred to as a cyclic arm. Specifically, $F_2$ has $2$ cyclic arms, $F_3$ has $3$ cyclic arms, and in general, $F_n$ has $n $ cyclic arms. Let $A_i$ denote the $i$-th cyclic arm. For convenience, we arrange the cyclic arms in a clockwise order as $A_1$, $A_2$, $\cdots$, $A_n$. Notably, $F_1$ consists of a single cyclic arm $A_1$, while $F_2$ has arms 
$A_1$, $A_2$, $F_3$ has $A_1$, $A_2$, $A_3$, and so on. In $F_n$, all arms meet at a single vertex, which we refer to as the central vertex.

We fix the vertex ordering of the Fan graph $F_n$ as $v_0$, $v_1$, $v_2$, $\cdots$, $v_{2n}$, in clockwise order. Here, $v_0$ is the central vertex, which has a degree of $2n$, while all other vertices belong to the cyclic arms and have  degree  $2$. The central vertex $v_0$ serves as the meeting point for all cyclic arms.

We label the cyclic arms as follows:
$A_1 = (v_0, v_1,v_2)$, $A_2 = (v_0, v_3,v_4)$, $A_3 = (v_0, v_5,v_6)$, $\cdots$, $A_r = (v_0, v_{2r-1},v_{2r})$, $\cdots$, $A_{2n} = (v_0, v_{2n-1},v_{2n})$.
Finally, we observe a strong relationship between the arithmetical structure on each cyclic arm $A_i$ and the overall arithmetical structure on the fan graph $F_n$.

Let us revisit the recurrence relation of $F_n$. For any 
$(\textbf{d},\textbf{r}) \in Arith(F_n)$, we observe that, based on Equation (\ref{r-structures on F_n}), the recurrence relation of $F_n$ follows a specific pattern: \\

(a) In the first equation, the $r$-label at the central vertex $v_0$ divides the sum of the $r$-labels at its adjacent vertices.

This property reflects the centrality of $v_0$, as it connects all the cyclic arms $A_1$, $A_2$, $\cdots$, $A_n$ and determines the arithmetic relationships within the fan graph $F_n$. Each adjacent vertex of 
$v_0$ belongs to one of the cyclic arms, contributing to its arithmetical structure. \\

(b) The remaining recurrence relations (except the first one) in $F_n$ are identical to the recurrence relations for the cycle graph $C_3$, with the exception that $r_0 | r_i + r_{i-1} \ for \ 1 < i \leq 2n$ i.e., $r_0 | r_i + r_{i-1} \ for \  1 < i \leq 2n$ need not hold in general for $F_n.$

Let $(\textbf{d}_i,\textbf{r}_i) \in Arith ( C_3)$, $1\leq i\leq n$ be $n$ arithmetical structures of $C_3$  where 
$\textbf{d}_i = (d_{i1}, d_{i2}, d_{i3})$ and $\textbf{r}_i = (r_{i1}, r_{i2}, r_{i3})$, with $r_{ij}$  being the  the $j^{th}$ entry of $\textbf{r}_i$. Let 
 $r_{i1} = r_0 \ \forall 1\leq i\leq n.$.  To construct $\tilde{\textbf{r}}$  and $\tilde{\textbf{d}}$, proceed as follows:
 \begin{equation} \label{DNR}
\begin{aligned}
\tilde{\textbf{r}} &= ( r_0, r_{12}, r_{13}, \cdots, r_{i2}, r_{i3}, \cdots, r_{n2}, r_{n3})\\ and\\
\tilde{\textbf{d}} &= (\tilde{d}_0, \tilde{d}_1, \tilde{d}_2, \tilde{d}_, \cdots, \tilde{d}_i, \cdots, \tilde{d}_{2n}), \ where\\
\tilde{d}_0 &= \frac{ r_{12} + r_{13} +  \cdots + r_{n2} + r_{n3}}{r_0} \\
\tilde{d}_i& = \begin{cases}
    \frac{r_0 +r_{i3}}{r_{j2}} \,\,\,\ if \ i=2j-1.\\
    \frac{r_0 +r_{j2}}{r_{j3}} \,\,\, \ if \ i =2j.
\end{cases}
\end{aligned}
\end{equation}

With the above observations, we have the following theorem.

\begin{theorem} \label{Arith st. of F_n from C_3}
Let $(\textbf{d}_i,\textbf{r}_i) \in Arith ( C_3)$, $1\leq i\leq n$ be $n$ arithmetical structures of $C_3$, where 
$\textbf{d}_i = (d_{i1}, d_{i2}, d_{i3})$ and $\textbf{r}_i = (r_{i1}, r_{i2}, r_{i3})$, with $r_{ij}$  being the $j^{th}$ entry of $\textbf{r}_i$. Let 
 $r_{i1} = r_0 \ \forall 1\leq i\leq n.$ Then $(\tilde{\textbf{d}},\tilde{\textbf{r}})$ given by equation \ref{DNR} is an arithmetical structure on fan graph $F_n$.   
\end{theorem}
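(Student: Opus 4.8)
The plan is to verify directly that the vector $\tilde{\textbf{r}}$ defined in (\ref{DNR}) satisfies the three divisibility conditions characterizing $r$-structures on $F_n$, namely (\ref{r-structures on F_n}), and then to observe that the entries prescribed for $\tilde{\textbf{d}}$ are precisely the positive integers forcing $(\diag(\tilde{\textbf{d}}) - A(F_n))\tilde{\textbf{r}} = 0$. First I would unpack the hypothesis $(\textbf{d}_i,\textbf{r}_i)\in Arith(C_3)$. Since $C_3$ is the triangle on the three vertices of the $i$-th arm, the structure $\textbf{r}_i=(r_{i1},r_{i2},r_{i3})$ satisfies $r_{i1}\mid r_{i2}+r_{i3}$, $r_{i2}\mid r_{i1}+r_{i3}$ and $r_{i3}\mid r_{i1}+r_{i2}$. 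Imposing the normalization $r_{i1}=r_0$ and identifying $r_{i2},r_{i3}$ with the labels at the arm vertices $v_{2i-1},v_{2i}$, these read $r_0\mid r_{i2}+r_{i3}$, $r_{i2}\mid r_0+r_{i3}$ and $r_{i3}\mid r_0+r_{i2}$.

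Next I would match these relations against (\ref{r-structures on F_n}) arm by arm. For an odd index $2i-1$ the required condition $\tilde{r}_{2i-1}\mid r_0+\tilde{r}_{2i}$ is literally $r_{i2}\mid r_0+r_{i3}$, and for the even index $2i$ the condition $\tilde{r}_{2i}\mid r_0+\tilde{r}_{2i-1}$ is literally $r_{i3}\mid r_0+r_{i2}$; both hold by the $C_3$ conditions above. For the central condition $r_0\mid \sum_{i=1}^{n}(r_{i2}+r_{i3})$, I would simply sum the $n$ relations $r_0\mid r_{i2}+r_{i3}$. This establishes that $\tilde{\textbf{r}}$ obeys all of (\ref{r-structures on F_n}).

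The only genuinely non-formal point is primitivity, which is part of the definition of an arithmetical structure. Since each $\textbf{r}_i$ is primitive, $\gcd(r_0,r_{i2},r_{i3})=1$, so any common divisor of all entries of $\tilde{\textbf{r}}$ divides $\gcd(r_0,r_{12},r_{13})=1$; hence $\tilde{\textbf{r}}$ is primitive and is a bona fide $r$-structure on $F_n$. Finally, the formulas for $\tilde{d}_0$, $\tilde{d}_{2j-1}$ and $\tilde{d}_{2j}$ in (\ref{DNR}) are integer-valued exactly by the three divisibility facts just verified, and reading off the rows of $\diag(\tilde{\textbf{d}})-A(F_n)$ against $\tilde{\textbf{r}}$ reproduces the defining system, giving $(\diag(\tilde{\textbf{d}})-A(F_n))\tilde{\textbf{r}}=0$. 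I expect no serious obstacle beyond careful bookkeeping: one must track the indexing shift between the arm pair $(r_{i2},r_{i3})$ and the fan-graph positions $(2i-1,2i)$, and reconcile the apparent typo $r_{i3}$ versus $r_{j3}$ in the displayed formula for $\tilde d_{2j-1}$.
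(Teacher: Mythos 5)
Your proposal is correct and is exactly the ``direct computation'' that the paper's one-line proof gestures at but does not carry out: you verify the divisibility conditions (\ref{r-structures on F_n}) arm by arm from the $C_3$ conditions, sum the relations $r_0 \mid r_{i2}+r_{i3}$ for the central vertex, check primitivity via $\gcd(r_0,r_{12},r_{13})=1$, and confirm the $\tilde{\textbf{d}}$ entries are the integers making $(\diag(\tilde{\textbf{d}})-A(F_n))\tilde{\textbf{r}}=0$. You also correctly identify and resolve the index typo ($r_{i3}$ versus $r_{j3}$) in the displayed formula for $\tilde{d}_{2j-1}$, so your write-up is in fact more complete than the paper's.
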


\begin{proof}
By direct computation one can verify the proof.
\end{proof}


\begin{exam}
Consider $F_2$. First label the arms of $F_2 $ as $A_1 = (r_0, r_{12},r_{13})$  and $A_2 = (r_0, r_{22},r_{23})$. Now, since we know $C_3 $ has 10 arithmetical structures see, [\cite{BHDNJC18}, Theorem 30], Then $$ \mathcal{C}_r = Arith( r \text{-structure  of  }  C_3) $$ $=  \{ (1,1,1), (1,2,3), (2,3,1), (3,1,2), (1,3,2), (3,2,1), (2,1,3), (1,1,2), (1,2,1), (2,1,1)\}.$ Now, we will construct r-structure and d-structure on $F_2$ from r-structure and d-structure of its arms. 
\begin{enumerate}
\item Take $\mathcal {C}
_r(A_1) = (1,1,1) = \mathcal{C}_r(A_2)$ in this case we will have $\tilde{\textbf{r}} = (1,1,1,1,1,1)$ usual r-struture of $F_2$.
\item Take  $\mathcal{C}_r(A_1) = (1,2,3) = \mathcal{C}_r(A_2)$, then we will have $\tilde{\textbf{r}} = (1,2,3,3,2)$. Now keeping first entry fixed rotate the entries of $\tilde{\textbf{r}}$  in $\rho$ orbit as described  previously. So that we will get  $\tilde{\textbf{r}} = (1,3,2,2,3)$.
\item Take  $\mathcal{C}_r(A_1) = (1,2,3), \mathcal{C}_r(A_2) = (1,3,2)$, then we have $\tilde{\textbf{r}} = (1,2,3,3,2)$.
\item Take  $\mathcal{C}_r(A_1) = (1,2,3), \mathcal{C}_r(A_2) = (1,1,1)$, then we have $\tilde{\textbf{r}} = (1,2,3,1,1)$. Now keeping first entry fixed rotate the entries of $\tilde{\textbf{r}}$  in $\rho$ orbit as described  previously. So that we will get $\tilde{\textbf{r}} = (1,1,1,2,3)$.
\end{enumerate}
In this way we can find $\tilde{\textbf{r}}  \ \in \ \mathcal{A}_r(F_2) = Arith( \ r-structure \ of \ F_2)$
\end{exam}

With the help of theorem \ref{Arith st. of F_n from C_3} we can give rough lower bound to the number of arithmetical structure on $F_n$.
\begin{theorem}
We have $| Arith ( F_n)| \geq 10^n$. 

\end{theorem}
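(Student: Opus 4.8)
The plan is to exhibit an injection from a large set of tuples of $C_3$-structures into $\mathrm{Arith}(F_n)$, using Theorem \ref{Arith st. of F_n from C_3} as the construction machinery. The key observation is that $C_3$ has exactly $10$ arithmetical $r$-structures, and Theorem \ref{Arith st. of F_n from C_3} takes an $n$-tuple of $C_3$-structures whose first coordinates all equal a common value $r_0$ and assembles them into an $r$-structure on $F_n$. So first I would restrict attention to those $C_3$-structures whose first entry is $1$; since the structure $(1,1,1)$ and all of the structures of the form $(1,*,*)$ in the list $\mathcal{C}_r$ have first coordinate $1$, I would count exactly how many of the $10$ have $r_{i1}=1$. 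From the explicit list $\{(1,1,1),(1,2,3),(2,3,1),(3,1,2),(1,3,2),(3,2,1),(2,1,3),(1,1,2),(1,2,1),(2,1,1)\}$ there are $5$ such structures, namely $(1,1,1),(1,2,3),(1,3,2),(1,1,2),(1,2,1)$. That alone would only give $5^n$, so to reach $10^n$ I would instead normalize differently.

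The cleaner route is to fix $r_0=1$ but also allow the arms to contribute all ten structures after a primitivity/scaling argument, or — more robustly — to prove the bound by a direct counting of distinct assembled $\tilde{\textbf r}$. The main point I would make precise is injectivity: different $n$-tuples $(\textbf r_1,\dots,\textbf r_n)$ of admissible $C_3$-structures produce different vectors $\tilde{\textbf r}$ on $F_n$, because the $i$-th arm's data $(r_{i2},r_{i3})$ occupies coordinates $2i-1$ and $2i$ of $\tilde{\textbf r}$, and these coordinate blocks are disjoint across arms. Hence the assignment $(\textbf r_1,\dots,\textbf r_n)\mapsto \tilde{\textbf r}$ is injective on any set of tuples with a common first coordinate. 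Taking all $n$ arms to range over the structures of $C_3$ sharing a fixed first coordinate, and choosing that first coordinate so that the maximal number of the ten structures are available, yields the count. To actually obtain the factor $10$ per arm, the argument should fix $r_0$ to be a common multiple (e.g. take $r_0$ equal to the least common value compatible with all ten primitive $C_3$-structures, then rescale), so that every one of the ten $C_3$-structures can be realized with first coordinate $r_0$ after clearing denominators.

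Concretely, I would argue: choose $r_0 = \mathrm{lcm}$ of the first coordinates appearing among the ten structures (here the first coordinates are among $\{1,2,3\}$, so $r_0=6$ works), scale each of the ten primitive $C_3$-structures so that its first coordinate becomes $r_0$, and feed any $n$-tuple of these (now all sharing first coordinate $r_0$) into Theorem \ref{Arith st. of F_n from C_3}. This produces $10^n$ vectors $\tilde{\textbf r}$, each a valid $r$-structure on $F_n$ (the theorem guarantees the divisibility conditions, and the assembled $\tilde{\textbf d}$ is integral); after dividing $\tilde{\textbf r}$ by the gcd of its entries we obtain a primitive vector, i.e. a genuine element of $\mathrm{Arith}(F_n)$. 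Injectivity of the map $(\textbf r_1,\dots,\textbf r_n)\mapsto \tilde{\textbf r}$ follows from the disjoint-block structure noted above, so $|\mathrm{Arith}(F_n)|\geq 10^n$.

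The main obstacle is the bookkeeping around primitivity and scaling: the construction in Theorem \ref{Arith st. of F_n from C_3} requires a \emph{common} first coordinate $r_0$ across all arms, but the ten $C_3$-structures are primitive with differing first coordinates, so one must rescale and then re-primitivize, and I would need to check that (i) after clearing denominators the $\tilde{\textbf d}$ entries remain positive integers, and (ii) the final primitivization does not collapse two distinct tuples to the same primitive vector. Point (ii) is where care is needed: rescaling by a common factor $r_0$ and then dividing by a global gcd is a bijective operation on the assembled vector, so distinctness of the scaled tuples is preserved; the genuinely delicate check is that no two of the $10^n$ assembled vectors happen to be positive rational multiples of each other, which again follows from the disjoint-block structure since two assembled vectors agreeing up to scale and sharing the same first coordinate $r_0$ must be identical. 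Once these verifications are in place the lower bound $10^n$ is immediate.
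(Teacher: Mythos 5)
Your proposal is correct, and it in fact supplies more than the paper does: the paper states this bound with no proof at all, only the preceding remark that it follows ``with the help of'' Theorem \ref{Arith st. of F_n from C_3}. Read literally, that theorem requires all $n$ arms to carry $C_3$-structures sharing a \emph{common} first coordinate $r_0$, and, as you correctly observe, only $5$ of the $10$ primitive structures on $C_3$ have first coordinate $1$ (three have first coordinate $2$, two have first coordinate $3$); so the naive application yields only $5^n$ structures, or $5^n+3^n+2^n$ if one runs the construction separately for each choice of common first coordinate, which is strictly less than $10^n$ for $n\ge 2$. Your rescaling step --- normalize all ten primitive $C_3$-structures to first coordinate $r_0=\mathrm{lcm}(1,2,3)=6$, assemble, then divide the assembled vector by its gcd --- is exactly what is needed to actually reach $10^n$, and your injectivity argument is sound: two assembled vectors that are proportional and share the first coordinate $6$ must be equal, and equal assembled vectors have identical coordinate blocks, hence arise from identical tuples, so primitivization collapses nothing. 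Two points you flag as ``to be checked'' are indeed routine but should be said explicitly in a write-up: (i) since the rescaled inputs are no longer primitive, Theorem \ref{Arith st. of F_n from C_3} cannot be invoked verbatim; one needs the observation that the divisibility conditions in (\ref{r-structures on F_n}) are homogeneous under scaling, so the same direct computation goes through and the quotients defining $\tilde{\textbf{d}}$ (which are scale-invariant) remain positive integers; (ii) dividing $\tilde{\textbf{r}}$ by its gcd preserves the relation $(\diag(\tilde{\textbf{d}})-A(F_n))\tilde{\textbf{r}}=0$ with the same $\tilde{\textbf{d}}$, so the primitivized pair is a genuine element of $\mathrm{Arith}(F_n)$. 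With those two sentences added, your argument is a complete proof of the stated bound, and it repairs a gap that the paper leaves open.
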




We will introduce a notation here, i.e., $\textbf{r}(1)$ = $ \#\{i \ |  \textbf{r}_i = 1\}.$
\begin{lemma}
Let $(\tilde{\textbf{d}},\tilde{\textbf{r}})$ be an arithmetical r-structure of $F_n$ obtained from arithmetical r-structure of its arms $A_i$ then ${\tilde{\textbf{r}}(1) > 0}$ .
\end{lemma}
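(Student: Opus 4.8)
The plan is to reduce the claim to a single property of the building blocks: every arithmetical $r$-structure on $C_3$ has at least one coordinate equal to $1$. Granting this, the result is immediate from the assembly rule (\ref{DNR}) used in Theorem \ref{Arith st. of F_n from C_3}. Indeed, the coordinates of $\tilde{\textbf{r}}$ are precisely $r_0$ together with the non-central entries $r_{i2}, r_{i3}$ of the $n$ arm structures $\textbf{r}_i = (r_0, r_{i2}, r_{i3})$. Hence every value occurring in any one arm structure also occurs among the coordinates of $\tilde{\textbf{r}}$: the shared entry $r_0 = r_{i1}$ is the first coordinate of $\tilde{\textbf{r}}$, while each of $r_{i2}, r_{i3}$ appears explicitly. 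So if some arm contains a $1$, then $\tilde{\textbf{r}}(1) > 0$.

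Thus it remains to verify the claim that each $(\textbf{d}_i, \textbf{r}_i) \in Arith(C_3)$ satisfies $\min\{r_{i1}, r_{i2}, r_{i3}\} = 1$. The quickest route is to invoke the explicit enumeration of the ten arithmetical $r$-structures of $C_3$ recorded in the Example following Theorem \ref{Arith st. of F_n from C_3} (equivalently [\cite{BHDNJC18}, Theorem 30]), each of which visibly contains a $1$. For a self-contained argument one can instead work directly from the divisibility relations $r_1 \mid r_2 + r_3$, $r_2 \mid r_1 + r_3$, $r_3 \mid r_1 + r_2$ together with $\gcd(r_1, r_2, r_3) = 1$: ordering $r_1 \le r_2 \le r_3$, the condition $r_3 \mid r_1 + r_2$ with $0 < r_1 + r_2 \le 2 r_3$ forces $r_1 + r_2 \in \{r_3, 2r_3\}$; the case $r_1 + r_2 = 2 r_3$ gives $r_1 = r_2 = r_3$, while $r_1 + r_2 = r_3$ reduces the remaining two divisibilities to $r_1 \mid 2 r_2$ and $r_2 \mid 2 r_1$, forcing $r_1 = r_2$ or $r_2 = 2 r_1$. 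In each subcase the gcd condition collapses the triple to a primitive vector whose smallest entry is $1$, so $r_1 = 1$.

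To conclude, I would pick any single arm, say $A_1$ with structure $(r_0, r_{12}, r_{13})$; by the claim one of these three integers equals $1$, and all three appear among the coordinates of $\tilde{\textbf{r}}$, whence $\tilde{\textbf{r}}(1) > 0$. The only real obstacle is the $C_3$ claim, and since it can be read off the ten-element list already exhibited in the paper or proved in a few lines of elementary number theory, I expect the argument to be short. The one point to state carefully is that the shared central value $r_0$ is counted in $\tilde{\textbf{r}}$ while the remaining arm entries are all retained, so no occurrence of a $1$ is lost in the assembly.
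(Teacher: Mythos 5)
Your proposal is correct and follows essentially the same route as the paper: the paper's proof is the one-line observation that $\textbf{r}(1) > 0$ holds for every $r$-structure on $C_3$, and that by construction of $\tilde{\textbf{r}}$ every arm entry survives into $\tilde{\textbf{r}}$, which is exactly your reduction. The only difference is that you additionally verify the $C_3$ fact (both by citing the ten-element list and by a short divisibility argument, which is sound), whereas the paper simply asserts it.
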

\begin{proof}
 since $\textbf{r}(1) > 0$ for r-structure on $C_3$. By our choice of $\tilde{\textbf{r}}$ we have  $\tilde{\textbf{r}}(1) > 0$.
\end{proof}

Given a fan graph $F_n$ containing $2n+1$ vertices $v_0$, $v_1$, $\cdots$, $v_{2n}.$ We define sub-division and smoothing operation as follows:

The sub-division operation adds a new cyclic arm say $A_k$ in between cyclic arms $A_i$ and $A_{i+1}$ with the r-labelling as $r_{k1}$, $r_{2n+1}$, $r_{2n+2}$ where  $r_{k1}$ is the r-labelling in the first vertex which is same as the r-labelling in the central vertex of $F_n$. 
The reverse operation smoothing will remove one cyclic arm say $A_k$ in between two cyclic arms $A_i$ and $A_{i+1}$ keeping the central vertex intact. We give following results relating the arithmetical structure on $F_{n+1}$ with the arithmetical structures on $F_n$ regarding the former graph as sub-division of the latter and vice-versa.

\begin{theorem} \label{subdivision and smoothing equation}
\begin{enumerate} 
\item Let $n \geq 3$ and  $(\textbf{d},\textbf{r}) \in Arith(F_n)$. Let $(r_{k1}, r_{2n+1}, r_{2n+2}) \in \mathbb{N}^3$ be such that  $r_{2n+1} \mid r_{k1} + r_{2n+2}$ , $r_{2n+2} \mid r_{k1} + r_{2n+1}$, $r_{k1} = r_{0}$ and $$r_{k1} = r_0  \mid \sum _{i= 1} ^ {2n+2 }r_i$$ where $\textbf{r} = (r_0, r_1, \cdots, r_{2n}) \in \mathcal{A}_{\textbf{r}}(F_n)$. Define integer vectors $\tilde{\textbf{d}}$ and $\tilde{\textbf{r}}$ of length $2n+3$ as follows:
\begin{equation}\label{Arithmetical r-structure of F_n to F_{n+1}}
\tilde{\textbf{r}_i} = \begin{cases}
  r_i \ for \ 0 \leq i \leq 2n,\\
   r_{2n+1} \ for \ i = 2n+1,\\
   r_{2n+2} \ for \ i = 2n+2,
\end{cases}
\end{equation}
\begin{equation} \label{Arithmetical d-structure of F_n to F_{n+1}}
\tilde{\textbf{d}_i} = \begin{cases}
  d_i \ for \ 0 \leq i \leq 2n,\\
  \frac {r_0 + r_{2n+2}}{r_{2n+1}} \ for \ i = 2n+1,\\
   \frac {r_0 + r_{2n+1}}{r_{2n+2}} \ for \ i = 2n+2.
\end{cases} 
\end{equation}
Then $(\tilde{\textbf{d}},\tilde{\textbf{r}})$ is an arithmetical structure on $F_{n+1}$.

\item Let $n \geq 3$ and $(\tilde{\textbf{d}},\tilde{\textbf{r}}) \in Arith(F_n)$ with $\tilde{r}_0 \mid \tilde{r}_i +\tilde{r}_{i+1}$ for $i$ odd. Define integer vectors $\textbf{d}$ and $\textbf{r}$ of length $2n-1$ as follows:
\begin{equation}\label{Arithmetical structure of F_n to F_{n-1}}
\begin{aligned}
\textbf{r}_i &= \begin{cases}
  \tilde{r}_i \ &for \ 0 \leq i \leq 2n-2\\
\end{cases} \\
\textbf{d}_i &= \begin{cases}
  \tilde{d}_i \ &for \ 0 \leq i \leq 2n-2,\\
  \end{cases} 
\end{aligned}
\end{equation}
where $k^{th}$ arm $ A_k = ( r_{k1}, r_{k2}, r_{k3})$ is removed keeping central vertex intact. Then $(\textbf{d},\textbf{r})$  is an arithmetical structure on $F_{n-1}$.
\end{enumerate}  
\end{theorem}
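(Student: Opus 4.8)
The plan is to reduce both directions to the neighbourhood--divisibility characterization of $r$-structures recorded in the Note after the Proposition: a positive primitive vector is an $r$-structure on a fan graph exactly when the label at each vertex divides the sum of the labels at its neighbours. Since a $d$-structure is uniquely determined by its $r$-structure, for each part it suffices to check that the proposed $r$-vector is primitive and satisfies these divisibility conditions on the target graph; the stated $\tilde{\textbf{d}}$ (resp.\ $\textbf{d}$) then falls out of the defining equations $d_i r_i = \sum_{\text{neighbours}} r_j$, exactly as in equation (\ref{r-structures on F_n}).

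For Part 1 (subdivision), observe that $F_{n+1}$ is obtained from $F_n$ by attaching a single triangle on the three vertices $v_0, v_{2n+1}, v_{2n+2}$, so the neighbourhoods of $v_1,\dots,v_{2n}$ are unchanged. Consequently their divisibility conditions are inherited verbatim from $(\textbf{d},\textbf{r})\in Arith(F_n)$, and only three new conditions must be verified, each of which is precisely one of the hypotheses: the central condition $r_0\mid\sum_{i=1}^{2n+2}r_i$ is assumed, and at the two new vertices the requirements $r_{2n+1}\mid r_0+r_{2n+2}$ and $r_{2n+2}\mid r_0+r_{2n+1}$ coincide with $r_{2n+1}\mid r_{k1}+r_{2n+2}$ and $r_{2n+2}\mid r_{k1}+r_{2n+1}$ since $r_{k1}=r_0$. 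Primitivity is immediate because $\gcd(\tilde r_0,\dots,\tilde r_{2n+2})$ divides the subfamily gcd $\gcd(r_0,\dots,r_{2n})=1$. Solving $d_i r_i=(\text{neighbour sum})$ then reproduces $\tilde d_{2n+1},\tilde d_{2n+2}$ and leaves $\tilde d_i=d_i$ for $1\le i\le 2n$; here I would flag that the central entry must be updated to $\tilde d_0=d_0+\tfrac{r_{2n+1}+r_{2n+2}}{r_0}$ rather than $d_0$, in agreement with the analogous earlier theorem, since $v_0$ acquires two extra neighbours.

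For Part 2 (smoothing), I would first use the rotation action $\rho$ of $\mathbb{Z}_n$ on $\mathcal{A}_r(F_n)$ to assume without loss of generality that the removed arm is the last one, with vertices $v_{2n-1},v_{2n}$; this is harmless because the extra hypothesis ``$\tilde r_0\mid\tilde r_i+\tilde r_{i+1}$ for all odd $i$'' is symmetric across the arms and hence $\rho$-invariant. Deleting this arm leaves $F_{n-1}$, the neighbourhoods of the surviving arm vertices are untouched, so their conditions carry over, and the only condition needing work is the central one: from $r_0\mid\sum_{i=1}^{2n}\tilde r_i$ together with the hypothesis at $i=2n-1$, namely $r_0\mid\tilde r_{2n-1}+\tilde r_{2n}$, subtraction yields $r_0\mid\sum_{i=1}^{2n-2}\tilde r_i$, which is exactly the central condition for $F_{n-1}$.

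The main obstacle is primitivity in the smoothing direction, since deleting two coordinates can raise the gcd. For instance the primitive structure $\textbf{r}=(2,2,2,2,2,1,1)$ on $F_3$ (with $r_0=2$) satisfies all the hypotheses, yet smooths to $(2,2,2,2,2)$, whose gcd is $2$, so the literal output is \emph{not} a valid $r$-structure. I would resolve this by dividing the truncated vector by $g=\gcd(\textbf{r}_0,\dots,\textbf{r}_{2n-2})$: because every divisibility condition is homogeneous it survives this scaling, and because each $d_i$ solves $d_i r_i=(\text{neighbour sum})$ the $d$-vector is left unchanged, so $(\textbf{d},\textbf{r}/g)$ is a genuine arithmetical structure on $F_{n-1}$. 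I would therefore either add the hypothesis that the output is already primitive (e.g.\ whenever $r_0=1$, which forces primitivity automatically) or fold the normalisation by $g$ into the definition of $\textbf{r}$ in the statement.
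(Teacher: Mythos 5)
Your overall strategy is the same as the paper's: verify, vertex by vertex, that the proposed vector satisfies the divisibility conditions of equation (\ref{r-structures on F_n}) on the target graph, with the conditions at the old arm vertices inherited verbatim and the hypotheses supplying exactly the new ones. But your write-up is more careful than the paper's own proof, and both corrections you flag are genuine. In Part 1 the paper keeps $\tilde{d}_0 = d_0$ and simply asserts $(\diag(\tilde{\textbf{d}})-\tilde{A})\tilde{\textbf{r}}=0$; in fact the first row of that product equals $d_0r_0-\sum_{i=1}^{2n+2}\tilde{r}_i = -(r_{2n+1}+r_{2n+2})\neq 0$, so the central entry must indeed be $d_0+\frac{r_{2n+1}+r_{2n+2}}{r_0}$ (an integer precisely because $r_0\mid\sum_{i=1}^{2n+2}r_i$ and $r_0\mid\sum_{i=1}^{2n}r_i$), exactly as you say and in agreement with the paper's earlier arm-duplication theorem. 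In Part 2 the paper claims the removal ``will not distort'' the remaining relations and never addresses primitivity; your counterexample $\tilde{\textbf{r}}=(2,2,2,2,2,1,1)$ on $F_3$ is valid (all hypotheses hold, the corresponding $\tilde{\textbf{d}}$ is $(5,2,2,2,2,3,3)$, yet the truncation $(2,2,2,2,2)$ has gcd $2$), and dividing by the gcd is the right repair, since the defining equations are homogeneous in $\textbf{r}$.

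One point you should make explicit: the central-vertex correction you flag in Part 1 is needed in Part 2 as well, so your opening claim that ``the stated $\textbf{d}$ falls out of the defining equations'' is not true there. Removing an arm strictly decreases the neighbour sum at $v_0$, so the theorem's prescription $\textbf{d}_0=\tilde{d}_0$ can never satisfy the central equation; the correct value is $\textbf{d}_0=\tilde{d}_0-\frac{\tilde{r}_{2n-1}+\tilde{r}_{2n}}{\tilde{r}_0}$, which is a positive integer precisely by the hypothesis $\tilde{r}_0\mid\tilde{r}_{2n-1}+\tilde{r}_{2n}$ at the removed arm (this is also what makes your truncated central condition $\tilde r_0\mid\sum_{i=1}^{2n-2}\tilde r_i$ hold, so the two roles of that hypothesis coincide). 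Your own example already exhibits this failure independently of primitivity: the truncated pair $((5,2,2,2,2),(2,2,2,2,2))$ violates the first defining equation, since $5\cdot 2-(2+2+2+2)=2\neq 0$. With that correction stated for both parts, and the gcd normalisation in Part 2, your argument is complete and repairs both the statement and the paper's proof.
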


\begin{proof}
For the first part of Theorem \ref{subdivision and smoothing equation}, we need to prove that $ (\tilde{\textbf{d}},\tilde{\textbf{r}})$ as defined in equation (\ref{Arithmetical r-structure of F_n to F_{n+1}}) and equation (\ref{Arithmetical d-structure of F_n to F_{n+1}}) satisfies the recurrence relation for $F_{n+1}$. Now, note that by our choice of $(r_{k1} = r_0, r_{2n+1}, r_{2n+2}) \in \mathbb{N}^3$,  we see that 
$$
\begin{aligned}
r_{2n+1} &\mid r_0 + r_{2n+2}\\
r_{2n+2} &\mid r_0 + r_{2n+1}\\
r_0 &\mid \sum _{i=1}^{2n+2} r_i\\
\end{aligned}
$$
Since we already have the relation 
$$
\begin{aligned}
  r_0 &| r_1 + r_2 + \cdots + r_{2n}, \ for \ i=0 \\
  r_i &| r_0 + r_{i+1} \ for \ i\ odd\\
  r_i &| r_0 + r_{i-1} \ for \ i\ even,\\
\end{aligned}
$$ $\forall \ 0 < i \leq 2n$ for $F_n$. Hence we have $(\diag(\tilde{\textbf{d}}) - A(F_{n+1}))\tilde{\textbf{r}} = 0$.\\
Secondly, for the second part of Theorem \ref{subdivision and smoothing equation}  we need to observe that  we are removing $k^{th}$ arm $A_k$ keeping central vertex intact so that it will have no effect in $r$-labeling of central vertex i.e., $v_0$. Further, note that in removing arm $A_k$ it will have effect of removing the equation 
$$
\begin{aligned}
  d_kr_k &= r_0 + r_{k+1}\\
  d_{k+1}r_{k+1} &= r_0 + r_{k-1}\\
\end{aligned}
$$
from the recurrence relation for $F_n$. Now after this removal we will be left with recurrence relation with no $r_k$ and $r_{k+1}$ term but, it will not distort the divisibility property of other r-labeling and d-labeling in remaining vertices. Hence the remaining recurrence relation is exactly the recurrence relation of $F_{n-1}$. Hence the theorem.
\end{proof}
\\
\begin{corollary}
 Let $n \geq 3$ and  $(\textbf{d},\textbf{r}) \in Arith(F_n)$. Let $(r_{k1}, r_{2n+1}, r_{2n+2}) \in \mathcal{A}_{\textbf{r}}(C_3)$ be such that  $r_{k1} = r_0 $  where $\textbf{r} = (r_0, r_1, \cdots, r_{2n}) \in \mathcal{A}_{\textbf{r}}(F_n)$. Define integer vectors $\tilde{\textbf{d}}$ and $\tilde{\textbf{r}}$ of length $2n+3$ as follows:
\begin{equation}
\tilde{\textbf{r}_i} = \begin{cases}
  r_i \ for \ 0 \leq i \leq 2n,\\
   r_{2n+1} \ for \ i = 2n+1,\\
   r_{2n+2} \ for \ i = 2n+2,
\end{cases}
\end{equation}
\begin{equation} 
\tilde{\textbf{d}_i} = \begin{cases}
  d_i \ for \ 0 \leq i \leq 2n,\\
  \frac {r_0 + r_{2n+2}}{r_{2n+1}} \ for \ i = 2n+1,\\
   \frac {r_0 + r_{2n+1}}{r_{2n+2}} \ for \ i = 2n+2.
\end{cases} 
\end{equation}
Then $(\tilde{\textbf{d}},\tilde{\textbf{r}})$ is an arithmetical structure on $F_{n+1}$.
\end{corollary}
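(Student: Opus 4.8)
The plan is to recognize that the corollary is precisely the special case of part (1) of Theorem~\ref{subdivision and smoothing equation} in which the triple $(r_{k1}, r_{2n+1}, r_{2n+2})$ is already known to be an $r$-structure on $C_3$. Accordingly, the whole argument reduces to checking that membership $(r_{k1}, r_{2n+1}, r_{2n+2}) \in \mathcal{A}_{\textbf{r}}(C_3)$, together with $r_{k1}=r_0$ and $(\textbf{d},\textbf{r})\in Arith(F_n)$, supplies every divisibility hypothesis demanded by that theorem; once this is done the conclusion follows by invoking it directly.

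First I would unpack the $C_3$ condition. Since $C_3$ is the triangle and every vertex is adjacent to both others, the neighbourhood-divisibility criterion says that $(r_{k1}, r_{2n+1}, r_{2n+2}) \in \mathcal{A}_{\textbf{r}}(C_3)$ means each entry divides the sum of the other two. Substituting $r_{k1}=r_0$ gives
\begin{align*}
 r_0 &\mid r_{2n+1} + r_{2n+2}, \\
 r_{2n+1} &\mid r_0 + r_{2n+2}, \\
 r_{2n+2} &\mid r_0 + r_{2n+1}.
\end{align*}
The last two are exactly the relations $r_{2n+1}\mid r_{k1}+r_{2n+2}$ and $r_{2n+2}\mid r_{k1}+r_{2n+1}$ required in Theorem~\ref{subdivision and smoothing equation}, and $r_{k1}=r_0$ is assumed outright, so those hypotheses come for free.

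Next I would verify the one remaining hypothesis, namely $r_0 \mid \sum_{i=1}^{2n+2} r_i$. Because $(\textbf{d},\textbf{r})\in Arith(F_n)$, the first divisibility relation of $(\ref{r-structures on F_n})$ gives $r_0 \mid r_1 + r_2 + \cdots + r_{2n}$. Splitting the larger sum as $\sum_{i=1}^{2n+2} r_i = (r_1 + \cdots + r_{2n}) + (r_{2n+1} + r_{2n+2})$ and using the first line displayed above, $r_0 \mid r_{2n+1}+r_{2n+2}$, shows that $r_0$ divides each summand and hence the whole sum. With all hypotheses of part (1) of Theorem~\ref{subdivision and smoothing equation} now established for the vectors $\tilde{\textbf{d}}$ and $\tilde{\textbf{r}}$ as defined, the theorem applies and yields $(\tilde{\textbf{d}},\tilde{\textbf{r}}) \in Arith(F_{n+1})$.

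There is no genuine obstacle here, since the proof is a reduction to an already-proved theorem; the only conceptual point worth flagging is that the single extra divisibility $r_0 \mid r_{2n+1}+r_{2n+2}$ provided by the $C_3$ structure is exactly what promotes the $F_n$ central-vertex relation $r_0 \mid r_1+\cdots+r_{2n}$ to the $F_{n+1}$ relation $r_0 \mid r_1+\cdots+r_{2n+2}$ required at the central vertex after the new arm is attached.
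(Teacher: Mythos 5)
Your proposal is correct and takes exactly the route the paper intends: the paper states this as an immediate consequence of part (1) of Theorem~\ref{subdivision and smoothing equation}, and your verification that the $C_3$ membership (each entry dividing the sum of the other two, with $r_{k1}=r_0$) supplies the divisibilities $r_{2n+1}\mid r_0+r_{2n+2}$, $r_{2n+2}\mid r_0+r_{2n+1}$, and $r_0\mid r_{2n+1}+r_{2n+2}$ — the last of which, combined with $r_0\mid r_1+\cdots+r_{2n}$ from the $F_n$ structure, yields $r_0\mid\sum_{i=1}^{2n+2}r_i$ — is precisely the missing bookkeeping. Your closing observation about the extra divisibility at the central vertex correctly identifies the only nontrivial point in the reduction.
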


The next example construct some arithmetical structures on $F_3$ from arithmetical structures on $F_2$. 
\begin{exam}
Let $r=( 1,2,3,3,2)$ be an r-structure on $F_2$. Now, choose $r_k = (r_{k1},r_{k2},r_{k3}) \ \in \ Arith(C_3)$ such that $r_{k1} = 1$. Then for $r_k = (1,1,1)$ we have $\tilde{r} = (1,2,3,3,2,1,1)$, for $r_k = (1,1,2)$ we have $\tilde{r} = (1,2,3,3,2,1,2)$, for $r_k = (1,2,1)$ we have $\tilde{r} = (1,2,3,3,2,2,1)$.
\end{exam}

\subsection{Graph obtained by applying clique-star operation on fan graph}
First, we recall the concept of clique-star operation on a graph. The concept of clique-star operation  has been well studied in \cite{HCEV18}. Given a graph $G$ and a clique $C$(a set of pairwise adjacent vertices) of $G$, the clique-star transform of $G$, denoted by $cs(G, C)$, is the graph obtained from $G$ by deleting all the edges between the vertices in $C$ and adding a new vertex $v$ with all the edges between $v$ and the vertices in $C$. For example, if $G $ is the complete graph $K_n$ on $n$ vertices then the clique on $G$ is the set of all vertices of $K_n$. Then if we apply the clique star operation on $K_n$ we will get star graph $S_n$. Note that $cs(G,C)$ has one vertex more than the number of vertices in $G$.
\cite{HCEV18} has studied the relationship between the arithmetical structure on $G$ and $cs(G,C)$ and has proved that the critical groups of $G$ and $cs(G,C)$ are isomorphic. We refer the reader to \cite{HCEV18} and references therein for more details on clique-star transformation of graphs. There are some special cases of clique-star transformation which needs special mention. If in a graph $G$, clique $C$ has two vertices i.e an edge $e$. Then, this clique-star transformation is called subdivision. This is because of the fact that in case of 2-clique in the graph, if we apply the clique-star operation, it will have the effect of dividing that particular edge e(say) into two edges. Given a graph $G$, the subdivision of an edge $e = uv$ with endpoints $u$ and $v$ yields a graph containing one new vertex $w$, and with a pair of edges $uw$ and $vw$ replacing e. Secondly, if in graph $G$, clique $C$ has one vertex say $v$. Then, in this case if we apply the clique-star operation on $G$ then it have the effect of adding one new pendant edge to the graph $G$ at the position of vertex $v$.

Now, we will introduce some notations in clique-star operation on graph $G$. Let $cs_n(G,C)$ denote the graph obtained from $G$ by applying clique-star operation $n$ times in $G$. Note that, $cs_n(G,C)$ has $n$ vertices more than the number of vertices in $G$. We would like to recall some of the results from \cite{HCEV18} regarding the relationship between arithmetical structure on $G$ and $cs(G,C)$. Given $(\textbf{d},\textbf{r}) \in Arith (G)$, let
\begin{equation}\label{cliquearith}
\tilde{\textbf{d}}_u = cs(\textbf{d}, C)_u = \begin{cases}
  \textbf{d}_u \ &if \ u \notin C\\ 
  \textbf{d}_u + 1 \ &if \ u \in C\\
  1 \ &if \ u = v,
\end{cases}
\ \ \ 
\tilde{\textbf{r}}_u = cs(\textbf{r}, C)_u = \begin{cases}
\textbf{r}_u \ & if \ u \in V\\
\sum _{u \in C} \textbf{r}_u \ &if \ u = v,
\end{cases} 
\end{equation} where V is the set of vertices of $G$.

\begin{theorem} \cite{HCEV18} \label{Cique Star Transform by Corrales and Valencia} 
Let $G$ be a strongly connected graph, $C$ a clique of $G$, and $\tilde{G} = cs(G, C)$. If $\mathcal{A}'(\tilde{G})$ is the set of arithmetical structures $(\textbf{d}',\textbf{r}')$ of $\tilde{G}$ with $\textbf{d}'_v = 1$, then
$\mathcal{A}' (\tilde{G}) = \{ (\tilde{\textbf{d},\tilde{\textbf{r}}}) \ | \  ( \textbf{d},\textbf{r}) \ \in \mathcal{A}({G})\} $,  where $\mathcal{A}(G) = Arith(G)$. 
    \end{theorem}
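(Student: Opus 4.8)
The plan is to prove the set equality by showing that the assignment $(\textbf{d},\textbf{r}) \mapsto (\tilde{\textbf{d}},\tilde{\textbf{r}})$ of (\ref{cliquearith}) and its natural inverse both carry arithmetical structures to arithmetical structures, so the two families coincide. Write $V = V(G)$ and let $v$ denote the single new vertex, so $V(\tilde{G}) = V \cup \{v\}$. The key combinatorial observation is how neighbourhoods change under the operation: a vertex $u \in V \setminus C$ has $N_{\tilde{G}}(u) = N_G(u)$ (the transform deletes only edges internal to $C$ and adds only edges from $v$ to $C$), a vertex $u \in C$ has $N_{\tilde{G}}(u) = (N_G(u) \setminus C) \cup \{v\}$, and the new vertex has $N_{\tilde{G}}(v) = C$.

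For the inclusion $\{(\tilde{\textbf{d}},\tilde{\textbf{r}}) : (\textbf{d},\textbf{r}) \in \mathcal{A}(G)\} \subseteq \mathcal{A}'(\tilde{G})$, I would verify $(\diag(\tilde{\textbf{d}}) - A(\tilde{G}))\tilde{\textbf{r}} = 0$ one row at a time. The row at $v$ reads $1 \cdot \tilde{r}_v = \sum_{u \in C} r_u$, which holds by the definition of $\tilde{r}_v$. The row at $u \in V \setminus C$ is unchanged and hence is exactly the $G$-equation $d_u r_u = \sum_{w \in N_G(u)} r_w$. The only substantive check is at $u \in C$, where $\tilde{d}_u = d_u + 1$ and
\[
(d_u+1)r_u = \sum_{w \in N_G(u)\setminus C} r_w + \tilde{r}_v = \sum_{w \in N_G(u)\setminus C} r_w + \sum_{w \in C} r_w = \left(\sum_{w \in N_G(u)} r_w\right) + r_u = d_u r_u + r_u,
\]
where the middle step splits the summand $r_u$ off from $\sum_{w \in C} r_w$ and uses that $C$ is a clique (so $C \setminus \{u\} \subseteq N_G(u)$), and the last step invokes the $G$-equation at $u$. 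Thus $\tilde{\textbf{r}} \in \ker(\diag(\tilde{\textbf{d}}) - A(\tilde{G}))$; positivity of $\tilde{\textbf{d}},\tilde{\textbf{r}}$ is immediate, and primitivity of $\tilde{\textbf{r}}$ holds because its restriction to $V$ is the primitive vector $\textbf{r}$.

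For the reverse inclusion I would start from $(\textbf{d}',\textbf{r}') \in \mathcal{A}'(\tilde{G})$ (so $d'_v = 1$), set $r_u = r'_u$ for $u \in V$, put $d_u = d'_u$ for $u \in V \setminus C$ and $d_u = d'_u - 1$ for $u \in C$, and show $(\textbf{d},\textbf{r}) \in \mathcal{A}(G)$ with $cs$-image $(\textbf{d}',\textbf{r}')$. The $\tilde{G}$-equation at $v$ forces $r'_v = \sum_{u \in C} r'_u$, which is exactly the value $\tilde{r}_v$ prescribed by (\ref{cliquearith}); reversing the displayed computation then yields the $G$-equations at every $u \in V$. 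The point requiring a genuine argument is positivity of $\textbf{d}$, namely $d'_u \geq 2$ for $u \in C$: the $\tilde{G}$-equation at such $u$ gives $d'_u r'_u = \sum_{w \in N_G(u)\setminus C} r'_w + r'_v$, and since $G$ is (strongly) connected $u$ has at least one neighbour, so the right-hand side strictly exceeds $r'_u$, forcing $d'_u > 1$. Primitivity of $\textbf{r}$ follows since $r'_v = \sum_{u\in C} r'_u$ is an integer combination of the remaining coordinates, so deleting $r'_v$ does not change the gcd. The two assignments are manifestly inverse, establishing the claimed equality. I expect the main obstacle to be the careful neighbourhood bookkeeping at vertices of $C$ together with securing the inequality $d'_u \geq 2$ in the reverse direction; the rest is routine substitution.
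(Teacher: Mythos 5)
Your proof is correct. Note that the paper itself offers no proof of this statement --- it is quoted from \cite{HCEV18} (Corrales--Valencia) as a known result --- so there is no internal argument to compare against; your blind reconstruction is a complete and valid substitute, and it follows essentially the same strategy as the original source: a row-by-row verification of $(\diag(\tilde{\textbf{d}})-A(\tilde{G}))\tilde{\textbf{r}}=0$ together with the explicit inverse assignment. The three points that genuinely need care are all handled properly in your write-up: the neighbourhood bookkeeping (in particular that $N_G(u)\cap C = C\setminus\{u\}$ for $u\in C$, which is exactly where the clique hypothesis enters), the primitivity transfer in both directions (using that $r'_v=\sum_{u\in C}r'_u$ is determined by the other coordinates, so it affects no gcd), and the inequality $d'_u\ge 2$ for $u\in C$ in the reverse direction, where your connectedness argument covers both the case of a neighbour inside $C$ (absorbed into $r'_v$) and outside $C$; the only degenerate situation it excludes is the trivial one-vertex graph, which is outside the scope of the theorem anyway.
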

    
It is further proved that $\Phi(G,\textbf{d},\textbf{r}) \cong \Phi(cs(G,C),\tilde{\textbf{d}},\tilde{\textbf{r}})$, see [ \cite{HCEV18}, Theorem 5.2, Theorem 5.3]. Again we recall following corollaries as the consequence of theorem \ref{Cique Star Transform by Corrales and Valencia} from \cite{HCEV18}.


\begin{corollary}\cite{HCEV18}
Given a graph $G$ and $v$ one of its vertices, let $\tilde{G}$ be the graph resulting from adding the edge $vv'$. If $(\textbf{d},\textbf{r})$ is an arithmetical structure of $G$, then
$$
\tilde{\textbf{d}}_u = cs(\textbf{d}, C)_u = \begin{cases}
  1 \ &if \ u = v'\\ 
  \textbf{d}_u + 1 \ &if \ u = v\\
  \textbf{d}_u \ &otherwise
\end{cases}
,\ \ \ \ 
\tilde{\textbf{r}}_u = cs(\textbf{r}, C)_u = \begin{cases}
\textbf{r}_v  \ & if \ u = v'\\
\textbf{r}_u \ &otherwise
\end{cases}
$$ is an arithmetical structure on $\tilde{G}$.
\end{corollary}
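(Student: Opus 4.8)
The plan is to recognize this statement as the single-vertex specialization of Theorem \ref{Cique Star Transform by Corrales and Valencia}. First I would observe that any singleton $C = \{v\}$ is trivially a clique, and that applying the clique-star operation with $C = \{v\}$ deletes no edges (there are none among the vertices of $C$) while introducing one new vertex joined precisely to $v$; this is exactly the graph $\tilde G$ obtained by adding the pendant edge $vv'$, with $v'$ playing the role of the new star-centre. Substituting $C = \{v\}$ into the general formulas (\ref{cliquearith}) then collapses them termwise to the expressions in the statement: in $\tilde{\textbf{d}}$ the increment $\textbf{d}_u + 1$ survives only at $u = v$ and the new vertex receives the value $1$, while in $\tilde{\textbf{r}}$ the sum $\sum_{u \in C}\textbf{r}_u$ reduces to the single term $\textbf{r}_v$. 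Once this match is confirmed, Theorem \ref{Cique Star Transform by Corrales and Valencia} immediately yields that $(\tilde{\textbf{d}}, \tilde{\textbf{r}})$ is an arithmetical structure on $\tilde G$.

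Should one prefer a self-contained argument, avoiding the strong-connectivity hypothesis of the cited theorem, I would instead verify the defining identity directly. Ordering the vertices of $\tilde G$ as those of $G$ followed by $v'$, the matrix $L(\tilde G, \tilde{\textbf{d}}) = \diag(\tilde{\textbf{d}}) - A(\tilde G)$ takes the block form
\[
L(\tilde G, \tilde{\textbf{d}}) = \begin{pmatrix} \diag(\textbf{d}) - A(G) + E_{vv} & -e_v \\ -e_v^t & 1 \end{pmatrix},
\]
where $e_v$ is the standard basis column supported at $v$ and $E_{vv} = e_v e_v^t$ accounts for the single degree increment at $v$. Writing $\tilde{\textbf{r}} = (\textbf{r}^t, \textbf{r}_v)^t$ and multiplying out, the top block equals $(\diag(\textbf{d}) - A(G))\textbf{r} + \textbf{r}_v e_v - \textbf{r}_v e_v$, whose first term vanishes because $(\textbf{d}, \textbf{r})$ is an arithmetical structure on $G$ and whose remaining terms cancel; the last coordinate equals $-e_v^t \textbf{r} + \textbf{r}_v = -\textbf{r}_v + \textbf{r}_v = 0$. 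Hence $L(\tilde G, \tilde{\textbf{d}})\tilde{\textbf{r}} = 0$.

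It then remains to confirm that $(\tilde{\textbf{d}}, \tilde{\textbf{r}})$ is genuinely an arithmetical structure, i.e. that $\tilde{\textbf{r}}$ is positive and primitive and $\tilde{\textbf{d}}$ positive. Positivity is immediate since all $\textbf{r}_u > 0$, $\textbf{r}_v > 0$, $\textbf{d}_v + 1 > 0$, and $1 > 0$. For primitivity, the entries of $\tilde{\textbf{r}}$ are exactly those of $\textbf{r}$ together with a duplicate of $\textbf{r}_v$, so $\gcd(\tilde{\textbf{r}}) = \gcd(\textbf{r}) = 1$. The only point demanding care, and the nearest thing to an obstacle here, is the coordinatewise cancellation at vertex $v$: one must check that the extra $+1$ placed on the diagonal entry $\diag(\tilde{\textbf{d}})_v$ is precisely offset by the contribution of the new edge $vv'$ carrying the label $\tilde{\textbf{r}}_{v'} = \textbf{r}_v$, which is exactly what makes the pendant attachment compatible with the original balance equation at $v$.
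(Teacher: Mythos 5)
Your first paragraph follows exactly the paper's route: the paper presents this corollary as a recalled consequence of Theorem~\ref{Cique Star Transform by Corrales and Valencia} from \cite{HCEV18}, i.e.\ the clique-star transform specialized to the singleton clique $C=\{v\}$, and your term-by-term collapse of the formulas in (\ref{cliquearith}) to the stated expressions is correct. Your supplementary block-matrix computation is also correct and provides a self-contained verification (including positivity and primitivity of $\tilde{\textbf{r}}$) that the paper itself does not spell out.
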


\begin{corollary}\cite{HCEV18}
Given a graph $G$ and  $e = u_1u_2$ one of its edges, let $\tilde{G}$ be the graph obtained by subdividing the edge $e$. If $(\textbf{d},\textbf{r})$ is an arithmetical structure of $G$, then
$$\tilde{\textbf{d}}_u = cs(\textbf{d}, C)_u = \begin{cases}
  1 \ &if \ u = v\\ 
  \textbf{d}_u + 1 \ &if \ u = u_1, u_2\\
  \textbf{d}_u \ &otherwise
\end{cases}
,\ \ \ \ 
\tilde{\textbf{r}}_u = cs(\textbf{r}, C)_u = \begin{cases}
\sum_{u \in e}\textbf{r}_u \ & if \ u = v\\
\textbf{r}_u \ &otherwise
\end{cases}
$$ is an arithmetical structure on $\tilde{G}$.
\end{corollary}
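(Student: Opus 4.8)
The plan is to recognize that subdividing the edge $e = u_1u_2$ is exactly the clique--star operation $cs(G,C)$ applied to the $2$-clique $C = \{u_1,u_2\}$: deleting the single edge joining $u_1$ and $u_2$ and attaching a new vertex $v$ adjacent to both endpoints is precisely what $cs(G,\{u_1,u_2\})$ produces. Under this identification, the vectors $\tilde{\textbf{d}}$ and $\tilde{\textbf{r}}$ in the statement are just the specialization of the general clique--star formulas (\ref{cliquearith}) to $C=\{u_1,u_2\}$, since $\tilde{\textbf{d}}_v=1$, $\tilde{\textbf{d}}_{u_i}=\textbf{d}_{u_i}+1$ and $\tilde{\textbf{r}}_v=\textbf{r}_{u_1}+\textbf{r}_{u_2}=\sum_{u\in C}\textbf{r}_u$ match (\ref{cliquearith}) verbatim. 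Thus the quickest route is to invoke Theorem \ref{Cique Star Transform by Corrales and Valencia} directly, which immediately places $(\tilde{\textbf{d}},\tilde{\textbf{r}})$ in $Arith(\tilde{G})$.

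For a self-contained argument I would instead verify $(\diag(\tilde{\textbf{d}})-A(\tilde{G}))\tilde{\textbf{r}}=0$ row by row. The rows indexed by vertices $u\notin\{u_1,u_2,v\}$ require no work: subdivision leaves the neighbourhood of such a $u$ unchanged and $\tilde{\textbf{r}}$ agrees with $\textbf{r}$ on all its neighbours, so the equation is the one already satisfied in $G$. The row at the new vertex $v$ is also immediate, since $v$ is adjacent only to $u_1$ and $u_2$, giving $\tilde{\textbf{d}}_v\tilde{\textbf{r}}_v = 1\cdot(\textbf{r}_{u_1}+\textbf{r}_{u_2}) = \tilde{\textbf{r}}_{u_1}+\tilde{\textbf{r}}_{u_2}$.

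The only rows carrying genuine content are those at $u_1$ and $u_2$, and this is the step I expect to be the main (though mild) obstacle, because the degrees there increase by one. For $u_1$ I would use that its $G$-equation $\textbf{d}_{u_1}\textbf{r}_{u_1}=\sum_{w\sim_G u_1}\textbf{r}_w$ contains the term $\textbf{r}_{u_2}$; after subdivision $u_1$ loses $u_2$ as a neighbour but gains $v$, so the right-hand side becomes $\left(\sum_{w\sim_G u_1}\textbf{r}_w-\textbf{r}_{u_2}\right)+\tilde{\textbf{r}}_v$, and substituting $\tilde{\textbf{r}}_v=\textbf{r}_{u_1}+\textbf{r}_{u_2}$ lets the $\textbf{r}_{u_2}$ terms cancel so that both sides equal $\textbf{d}_{u_1}\textbf{r}_{u_1}+\textbf{r}_{u_1}$; the symmetric computation handles $u_2$. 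Finally I would note that $\tilde{\textbf{r}}$ remains primitive: its entries include all entries of the primitive vector $\textbf{r}$, so their gcd already divides $1$. Together these confirm $(\tilde{\textbf{d}},\tilde{\textbf{r}})\in Arith(\tilde{G})$.
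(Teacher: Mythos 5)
Your proposal is correct and its main route is exactly the paper's: the paper states this corollary without independent proof, recalling it from \cite{HCEV18} as the specialization of Theorem \ref{Cique Star Transform by Corrales and Valencia} to the $2$-clique $C=\{u_1,u_2\}$, which is precisely your first paragraph. Your supplementary row-by-row verification (including the cancellation of $\textbf{r}_{u_2}$ in the $u_1$-row and the primitivity remark) is also correct and simply makes explicit what the citation leaves implicit.
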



In view of the above theorems we have following observation. We would like to generalize the sequence of clique-star operation in graph $G$. Given $(\textbf{d},\textbf{r}) \in  Arith(cs_n(G)).$
$$
\tilde{\textbf{d}}_u = cs_{n+1}(\textbf{d}, C)_u = \begin{cases}
  \textbf{d}_u \ &if \ u \notin C\\ 
  \textbf{d}_u + 1 \ &if \ u \in C\\
  1 \ &if \ u = v
\end{cases}
,\ \ \ \ 
\tilde{\textbf{r}}_u = cs_{n+1}(\textbf{r}, C)_u = \begin{cases}
\textbf{r}_u \ & if \ u \in V\\
\sum _{u \in C} \textbf{r}_u \ &if \ u = v
\end{cases},
$$ where V is the set of vertices of $cs_{n}(G)$. Then we have the following theorem which relates the arithmetical structure on $cs_n(G)$ and $cs_{n+1}(G)$.

\begin{theorem} \label{Relation between cs_n and cs_{n+1}} 
Let $cs_n(G)$ be a strongly connected graph, $C$ a clique of $cs_n(G)$, and let $\tilde{G}_{n+1} =cs_{n+1}(G,C) =  cs(cs_n(G), C)$. If $\mathcal{A}'(\tilde{G}_{n+1})$ is the set of arithmetical structures $(\textbf{d}',\textbf{r}')$ of $\tilde{G}_{n+1}$ with $\textbf{d}'_v = 1$, then
$\mathcal{A}' (\tilde{G}_{n+1}) = \{ (\tilde{\textbf{d},\tilde{\textbf{r}}}) \ | \  ( \textbf{d},\textbf{r}) \ \in \mathcal{A}({cs_n(G)})\} $,  where $\mathcal{A}(cs_n(G)) = Arith(cs_n(G))$.
\end{theorem}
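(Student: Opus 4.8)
The plan is to observe that Theorem \ref{Relation between cs_n and cs_{n+1}} is nothing more than Theorem \ref{Cique Star Transform by Corrales and Valencia} applied to the graph $H := cs_n(G)$ in place of $G$. The clique-star transform $cs(\cdot, C)$ of \cite{HCEV18} takes as input any strongly connected graph together with a clique $C$, and the one-step passage from $cs_n(G)$ to $cs_{n+1}(G,C) = cs(cs_n(G), C)$ is precisely such a transform. So the whole argument should reduce to a substitution of hypotheses.

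First I would verify that the hypotheses of Theorem \ref{Cique Star Transform by Corrales and Valencia} are met with $H = cs_n(G)$: by assumption $cs_n(G)$ is strongly connected and $C$ is a clique of it, and by definition $\tilde{G}_{n+1} = cs(H, C)$ plays the role of $\tilde{G}$ in that theorem. Next I would check that the formulas defining $cs_{n+1}(\textbf{d}, C)$ and $cs_{n+1}(\textbf{r}, C)$ stated immediately before Theorem \ref{Relation between cs_n and cs_{n+1}} (with $V$ the vertex set of $cs_n(G)$) are literally the single-step clique-star maps $cs(\textbf{d}, C)$ and $cs(\textbf{r}, C)$ of (\ref{cliquearith}) evaluated on the arithmetical structures of $H$. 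This is immediate from comparing the two displays, since both assign $\textbf{d}_u+1$ on $C$, $\textbf{d}_u$ off $C$, the value $1$ to the new vertex $v$, and $\sum_{u\in C}\textbf{r}_u$ as its $r$-label.

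Once this identification is in place, the set $\{(\tilde{\textbf{d}}, \tilde{\textbf{r}}) \mid (\textbf{d}, \textbf{r}) \in \mathcal{A}(cs_n(G))\}$ is exactly the image of $\mathcal{A}(H)$ under the clique-star map, and Theorem \ref{Cique Star Transform by Corrales and Valencia} identifies this image with the set $\mathcal{A}'(\tilde{G}_{n+1})$ of arithmetical structures of $\tilde{G}_{n+1}$ whose $d$-value at the new vertex equals $1$. This yields the claimed equality.

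Since the conclusion follows by direct substitution into an already-established result, there is no genuine obstacle here. The only point deserving attention is the standing assumption that $cs_n(G)$ is strongly connected, which is hypothesized in the statement and which one would in practice maintain inductively, as the clique-star operation preserves strong connectivity. I would therefore present the proof as a one-line reduction to Theorem \ref{Cique Star Transform by Corrales and Valencia}, with the observation that $cs_{n+1}$ on structures of $cs_n(G)$ is by definition the single clique-star map $cs$.
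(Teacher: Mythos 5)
Your proposal is correct and takes essentially the same route as the paper, which likewise disposes of the statement by reducing it to Theorem~\ref{Cique Star Transform by Corrales and Valencia}. If anything, your reading is slightly sharper: the paper's proof says the result ``follows from repeated use'' of that theorem, whereas you correctly observe that a \emph{single} application with $H = cs_n(G)$ in the role of $G$ suffices, since the statement only relates the consecutive stages $cs_n(G)$ and $cs_{n+1}(G,C)$.
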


\begin{proof}
    Proof follows from repeated  use of theorem \ref{Cique Star Transform by Corrales and Valencia}.
\end{proof}

We see that the number of arithmetical structures on $cs_{n+1}(G,C)$ is at least the number of arithmetical structures on $cs_n(G,C)$, i.e., $$|Arith(cs_n(G,C))| < |Arith(cs_{n+1}(G,C))|.$$ Further, we note that $\Phi(cs_n(G,C),\textbf{d},\textbf{r}) \cong \Phi(cs_{n+1}(G,C),\tilde{\textbf{d}},\tilde{\textbf{r}})$ by \cite{HCEV18}, Theorem 5.2. We further observe that in a strongly connected graph $cs_n(G)$ obtained from strongly connected graph G after applying the clique-star operation n times. The set $Arith(cs_n(G))$ is in bijective correspondence with the set $\mathcal{A}' (\tilde{G}_{n+1}) = \{ (\tilde{\textbf{d},\tilde{\textbf{r}}}) \ | \  ( \textbf{d},\textbf{r}) \ \in \mathcal{A}({cs_n(G)})\} $, where $\mathcal{A}'(\tilde{G}_{n+1})$ is the set of arithmetical structures $(\textbf{d}',\textbf{r}')$ of $\tilde{G}_{n+1}$ with $\textbf{d}'_v = 1$.

We now turn to clique-star transformation of Fan graph. Fan graph $F_n$ on $2n+1$
vertices has $n$ number of choices for 3-clique, $3n$ number of choices for 2-clique and $2n+1$ number of choices for 1-clique. We would like to first take up the case for 3-clique. If we apply clique-star transformation on Fan graph $F_n$ with $2n+1$ vertices $n$ times with $C$ as 3-clique. We will get special type of graph having $2n + n + 1 = 3n + 1$ vertices. We will name this graph as arrow star graph and we denote it by $AS_{3n+1}$. Note that $AS_{3n+1}$ as no 3-cliques, of course it has 2-cliques and 1-cliques. We will further get new graphs if we go on  applying clique-star operation accordingly on $AS_{3n+1}$. See fig. \ref{Arrow Star Graph}.
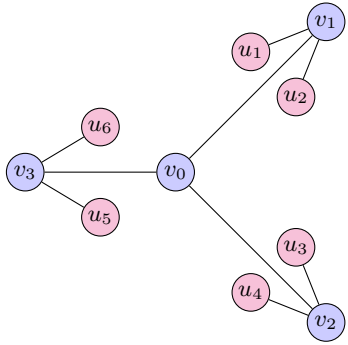
\begin{figure}[H] 
\centering
\begin{tikzpicture}[scale=2, every node/.style={circle, draw, fill=blue!20, minimum size=0.5cm, inner sep=0pt, font=\small}]

\node (A) at (0,0) {$v_0$};

\node (B) at (1,1) {$v_1$};
\node (C) at (1,-1) {$v_2$};
\node (D) at (-1,0) {$v_3$};
\node(E) [fill=magenta!30] at (0.5,0.8) {$u_1$};
\node (F) [fill=magenta!30] at (0.8,0.5){$u_2$};
\node(G) [fill=magenta!30] at (0.5,-0.8) {$u_4$};
\node (H) [fill=magenta!30] at (0.8,-0.5){$u_3$};
\node(I) [fill=magenta!30] at (-0.5,0.3) {$u_6$};
\node (J) [fill=magenta!30] at (-0.5,-0.3){$u_5$};
\draw (A) -- (B);
\draw (A) -- (C);
\draw (A) -- (D);
\draw (B) --(E);
\draw(B) --(F);
\draw (C) --(G);
\draw(C) --(H);
\draw (D) --(I);
\draw(D) --(J);
\end{tikzpicture}
\caption{ Arrow Star Graph $AS_{10}$ with 10 vertices obtained from Fan Graph $F_3$ by applying clique star transformation.}
\label{Arrow Star Graph}
\end{figure}

Now, we observe that there are at least $3n$ ways of applying clique-star transformation on $F_n$ with $C$ as 2-clique. Among all such choices we would like to apply clique-star transformation with $C$ as 2-clique on $F_n$ as follows. Since we know that $F_n$ has $n$ cyclic arms $A_i$ as $C_3$. We will apply subdivision of one edge on one of the arm say $A_i$ and then we will apply similar subdivision on next consecutive arm say $A_{i+1}$. We will repeat this process in anti-clockwise order until we reach the arm $A_i$. After applying $n$ number of sub-divisions on $F_n$ in this order we will get a graph similar like Fan graph having $n$ number of arms and $2n + 1 + n = 3n + 1$ vertices. But, this time the resulting graph will have cyclic arms $A_i$ as $C_4$.  We will name this new graph as $C_4$-Fan Graph or $C_4$-Fan Graph and denote it by $C_4F_n$. Note that $C_4F_n$ has no 3-clique but has 2-cliques and 1-cliques. See Fig. \ref{C_4F_3 Graph}.
\begin{figure}[H]
    \centering
\begin{tikzpicture}[scale=2, every node/.style={circle, draw, fill=blue!20,minimum size=0.5cm,, inner sep=0pt, font=\small}]

\node[fill=blue!30] (A) at (0,0) {$v_0$};

\node (B1) at (1,0) {$v_6$};
\node (B2) at (0.5,0.87) {$v_5$};
\node (B3) at (-0.5,0.87) {$v_4$};
\node (B4) at (-1,0) {$v_3$};
\node (B5) at (-0.5,-0.87) {$v_2$};
\node (B6) at (0.5,-0.87) {$v_1$};
\node [fill=magenta!30] (B7) at (0,1.8) {$u_1$};
\node [fill=magenta!30] (B8) at (-1.5,-0.9) {$u_2$};
\node [fill=magenta!30] (B9) at (1.5,-0.8) {$u_3$};
\draw[thick] (A) -- (B1);
\draw[thick] (A) -- (B2);
\draw[thick] (B6) -- (B9);

\draw[thick] (A) -- (B3);

\draw[thick] (A) -- (B4);

\draw[thick] (A) -- (B5);

\draw[thick] (A) -- (B6);
\draw[thick] (B1) -- (B9);

 \draw[thick] (B3) -- (B7);
 \draw[thick] (B2) -- (B7);
  \draw[thick] (B4) -- (B8);
 \draw[thick] (B5) -- (B8);
\end{tikzpicture}
    \caption{$C_4$ Fan graph $C_4F_3$ obtained from $F_3$ by applying Clique-star operation.}
    \label{C_4F_3 Graph}
\end{figure}
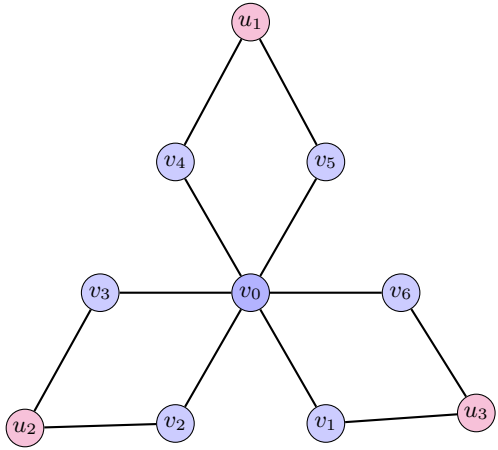

Finally, we observe that there are at least $2n+1$ ways of applying clique-star transformation on $F_n$ with $C$ as 1-clique. Among all such choices we would like to apply clique-star transformation on $F_n$ with $C$ as 1-clique as follows. Beginning with the central vertex of $F_n$ say $v_0$. We will add $2n+1$ number of  pendant vertices to $F_n$ beginning from central vertex $v_0$ in anti-clockwise direction. In this process we will get a well-known graph from $F_n$ called the corona graph with one pendant edge attached to each vertex of $F_n$. See Fig. \ref{Corona Graph on F_3}.

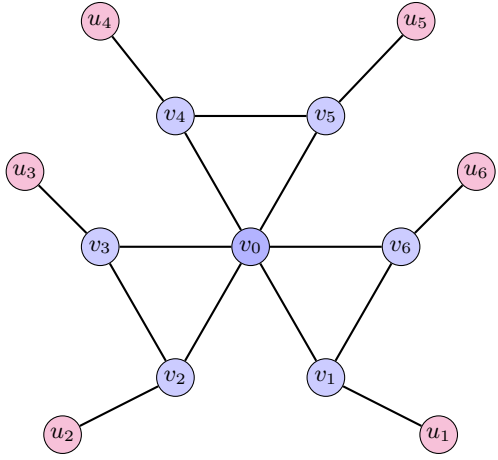
\begin{figure}[H]
\centering
\begin{tikzpicture}[scale=2, every node/.style={circle, draw, fill=blue!20, minimum size=0.5cm, inner sep=0pt, font=\small}]

\node[fill=blue!30] (A) at (0,0) {$v_0$};

\node (B1) at (1,0) {$v_6$};
\node (B2) at (0.5,0.87) {$v_5$};
\node (B3) at (-0.5,0.87) {$v_4$};
\node (B4) at (-1,0) {$v_3$};
\node (B5) at (-0.5,-0.87) {$v_2$};
\node (B6) at (0.5,-0.87) {$v_1$};
\node  [fill=magenta!30](B7) at (-1,1.5) {$u_4$};
\node  [fill=magenta!30] (B8) at (1.1,1.5) {$u_5$};
\node [fill=magenta!30] (B9) at (-1.5,0.5) {$u_3$};
\node [fill=magenta!30] (B10) at (1.5,0.5) {$u_6$};
\node [fill=magenta!30] (B11) at (-1.25,-1.25) {$u_{2}$};
\node [fill=magenta!30] (B12) at (1.25,-1.25) {$u_{1}$};
\draw[thick] (A) -- (B1);
\draw[thick] (A) -- (B2);

\draw[thick] (A) -- (B3);
\draw[thick] (B2) -- (B3);

\draw[thick] (A) -- (B4);

\draw[thick] (A) -- (B5);
\draw[thick] (B4) -- (B5);

\draw[thick] (A) -- (B6);

\draw[thick] (B1) -- (B6);
 \draw[thick] (B3) -- (B7);
 \draw[thick] (B2) -- (B8);
 \draw[thick] (B4) -- (B9);
 \draw[thick] (B1) -- (B10);
 \draw[thick] (B5) -- (B11);
 \draw[thick] (B6) -- (B12);
\end{tikzpicture}
\caption{Corona graph  on $F_3$ with one pendent vertex at each vertex of $F_3$.}
\label{Corona Graph on F_3}
\end{figure}

Now, we discuss one algorithm that produces arithmetical structures on any subdivision of a graph as given by \cite{HCEV18}. First, we recall few terminologies from \cite{HCEV18}. Let $V = V(G)$ and $e_v$ denote the characteristic vector of $v$ given by $(e_v)_u = \delta_{vu}$ for each $u \in V.$ Given $U \subset V$ with $m$ vertices, and order on $U$ is a bijective function $\theta : U \rightarrow [m],$ where $\theta$ can be described by the vector $(\theta^{-1}(1), \theta^{-1}(2), \cdots, \theta^{-1}(m)).$ Given $u, v \in V,$ let $P_{u,v}$ be the set of paths between $u$ and $v$. Also, given $\textbf{r} \in \mathbb{N}^V$ and $u \in V$, let $$ W_u(\textbf{r}) = \{w \in V \mid \ \exists \ P \in P_{u,w} \text{   such that } \textbf{r}_w \neq 0 \ and \ \textbf{r}_v = 0 
\text{ for every internal vertex } v \ of \ P \}.$$ Given a vector $\textbf{r} \in \mathbb{N}^V$, let $$ Supp(\textbf{r}) = \{v \in V \mid \textbf{r}_v \neq 0\}.$$
The algorithm will receive three inputs given by: Fan graph $F_n$, $\theta$ an order in  $V(C_4F_n)$  - $V(F_n),$ and 
$$(\textbf{r}_0)_u = \begin{cases}
    \textbf{r}_u &if u \in V(F_n)\\
    0 &otherwise
\end{cases}
$$

\begin{algorithm}[H]
\caption{Construction of $\textbf{r}(F_n, \textbf{r}_0, \theta) \in \mathbb{N}^{V({C_4F_n})}$  for  $C_4F_n$.}
\textbf{Input: A graph $F_n$, a vector $\textbf{r}_0 \in \mathbb{N}^{V(F_n)}$ and an order $\theta $ on $U = V(C_4F_n) - V(F_n)$.} \\
\textbf{Output: A vector $\textbf{r}(F_n, \textbf{r}_0, \theta) \in \mathbb{N}^{V({C_4F_n})}$}
\begin{algorithmic}
\item Set $ i = 1.$
\item While $ i \leq |U|$, set $u = \theta^{-1}(i),$ $\textbf{r}_i = \textbf{r}_{i-1} + \sum _{w \in W_{_{u(\textbf{r}_{i-1})}}} \textbf{r}_we_u $, and set $i = i+1.$
\item $Return :$ $\textbf{r}(F_n,\textbf{r}_0,\theta)$ = $\textbf{r}_{|U|}.$
 \end{algorithmic}
\end{algorithm}
 Now, we illustrate this algorithm with the help of an example.
    
\begin{exam}
Let $F_3$ be the fan graph with vertices labeled as $(v_0, v_1, v_2, v_3, v_4, v_5, v_6)$. Let $\textbf{r}_0 = ( 1,2,3,3,2,2,1)$. We subdivide the edges $v_1v_2$, $v_3v_4$, $v_5v_6$ once and obtain $C_4F_3$ , so that we have $\textbf{r}_0 = (v_0,v_1,0,v_2,v_3,0,v_4,v_5,0,v_6) =(1,2,0,3,3,0,2,2,0,1)$. Take $\theta$ an order on $(v_7,v_8,v_9)$. Here we have named added vertices as $v_7$, $v_8$ and $v_9$. Since $|U| = 3$, we will have three iterations. In iteration 1 we get $W_{\theta^{-1}(1)}(\textbf{r}_0) = W_{v_7}(1,2,0,3,3,0,2,2,0,1) =\{v_1,v_2\}$ and $\textbf{r}_1 = (1,2,5,3,3,0,2,2,0,1) $. In iteration 2 we get $W_{\theta^{-1}(2)}(\textbf{r}_1) = W_{v_8} (1,2,5,3,3,0,2,2,0,1) = \{v_3,v_4\}$ and $\textbf{r}_2 = (1,2,5,3,3,5,2,2,0,1)$. In iteration 3 we get $W_{\theta^{-1}(3)}(\textbf{r}_2) = W_{v_9} (1,2,5,3,3,5,2,2,0,1) = \{v_5,v_6\}$ and $\textbf{r}_3 = (1,2,5,3,3,5,2,2,3,1)$. Thus, $\textbf{r} (F_3,\textbf{r}_0,\theta) = (1,2,5,3,3,5,2,2,3,1).$
\end{exam}

Moreover, if we subdivide the edges of $C_4F_3$ further, we will get more arithmetical structures. For instance we can start with $\textbf{r}_0 =( 1,1,1,1,1,1,1,1,1,0,1)$, $\textbf{d} = ( 6,2,2,2,2,2,2,2,2,2)$. Then we get $\textbf{r} = (1,1,1,1,1,1,1,1,1,2,1)$ and $\textbf{d} = (6,2,2,2,2,2,2,2,3,1,3)$. 

Motivated by the above algorithm, we write an algorithm to obtain arithmetical structures on any graph $cs(G)$ obtained by applying clique-star transformation on some graph $G$. For example, we give two such algorithms below to obtain some arithmetical structures on the arrow star graph $AS_{3n+1}$ and the corona graph on $F_n$ with one pendent vertex.

The algorithm will receive four inputs given by: Fan graph $F_n$, $\theta$ an order in $V(AS_{3n+1}) - V(F_n)$ or $V(corona \ graph \ on \ F
_n \ with \  one \ pendent \ vertex) - V(F_n),$ clique $C \subseteq V(F_n)$ and 
$$(\textbf{r}_0)_u = \begin{cases}
    \textbf{r}_u &if u \in V(F_n)\\
    0 &otherwise
\end{cases}
$$
\begin{algorithm}[H]
\caption{Algorithm for construction of $\textbf{r}(F_n,\textbf{r}_{i-1}, \theta)$ of  $ AS_{3n+1}.$}
 \textbf{Input :  A graph $F_n$, a vector $\textbf{r}_0 \in \mathbb{N}^{V(F_n)}$ and an order $\theta $ on $U = V(AS_{3n+1}) - V(F_n)$ and $n$-number of 3-clique $C$ in $F_n$.}\\
\textbf{Output :  A vector $\textbf{r}(F_n, \textbf{r}_0, \theta, C) \in \mathbb{N}^{V({AS_{3n+1}})}$.}

\center \underline {\textbf{Algorithm}}

\begin{algorithmic}
\item Step 1: Take $n$ number of 3-clique in $F_n$ numbered as $C_i$.

\item Step 2: Replace all the cliques in step 1 by star graph with centers $u_1$, $\cdots$, $u_n$.

\item Step 3: Set $\textbf{r}_0 = ( v_0, \cdots, v_{2n}, u_1, \cdots, u_n)$.

\item Step 4: Set $i = 1$.

\item Step 5: While $ i \leq |U|$, set $(\textbf{r}_i)_v = \begin{cases}
   \sum _{v \in C_i} \textbf{r}_v & if v = u_j\\
\textbf{r}_v & otherwise
\end{cases}$ and set $i = i+1$\\
   Step 6 : Return: $\textbf{r}(F_n,\textbf{r}_{i-1}, \theta)$.
   \end{algorithmic}
   \end{algorithm}
   We illustrate this with the help of one example.

\begin{exam}  
Consider $F_2$. Now we can see that $F_2$ has $2$ number of 3-cliques. Take 3-clique and replace the clique by star graph with centers $u_1$ and $u_2$. $|U| = 2$ so we will have 2 iterations. Take $\textbf{r}_0 = (1,2,3,2,3)$. Now we will have $\textbf{r}_1 = (1,2,3,2,3,6)$ and $\textbf{r}_2 = (1,2,3,2,3,6,6) = \textbf{r}( F_2,\textbf{r}_0,\theta,C)$. Proceeding in this way we can produce some arithmetical structures on $F_n$.
\end{exam}

The Algorithm for the corona graph on $F_n$ with one pendent vertex on vertices of its arms is similar to algorithm for $AS_{3n+1}$ except that clique here will be 1-clique, i.e.,  $C = \{v_i, i \neq 0\}.$ In general we cannot obtain all the arithmetical structures on $AS_{3n+1}$, $C_4F_n$ and corona graph on $F_n$ with one pendent vertices by the above algorithm, but it will certainly give lower bound on number of arithmetical structures on this graphs. In \cite{HCEV18}, Proposition 6.8, if graph $s(G)$ is obtained from graph $G$ by subdividing its edges $e \in E(G)$ $n_e$ times then $$ |\mathcal{A}(s(G))| \geq (|\mathcal{A}(G)| - 1) \ . \ \Pi_{e \in E(G)} C_{n_e} \ + \ \Pi_{e \in E(G)} C_{n_e} + 1, $$ where $C_n$ is the catalan number. In particular, $$ |\mathcal{A}(C_4F_n)| \geq (|\mathcal{A}(F_n)| - 1) \ . \ \Pi_{e \in E(F_n)} C_{n_e} \ + \ \Pi_{e \in E(F_n)} C_{n_e} + 1.$$ Note that to get $AS_{3n+1}$ from $F_n$ we are subdividing one edge of cyclic arm once. Hence we have $n_e = 1$ for  one $e$ in $A_i$ and $n_e = 0$ for other two edges. Therefore, 
 $$ |\mathcal{A}(C_4F_n)| \geq (|\mathcal{A}(F_n)| - 1) +2.$$ Roughly, we can say $$ |\mathcal{A}(AS_{3n+1})| \geq (|\mathcal{A}(F_n)| - 1) $$ and $$ |\mathcal{A}(Corona \ graph \ on \ F_n)| \geq (|\mathcal{A}(F_n)| - 1) .$$

\section{Arithmetical Structures on Star Graphs}
Let $S_n$ denote the star graph with $n$ vertices $v_1, v_2, \cdots, v_n$ connected to one central
vertex labeled $v_{0}.$  It is mentioned by 
Corrales and Valencia [7], arithmetical structures on $S_n$ are in bijection with solutions in the positive integers of the  Diophantine equation
$$d_0 = \sum_{i=1}^{n} \frac{1}{d_i}. $$

Consider the star graph $S_n$. Then the adjacency matrix of $S_n$ is given by 
$$A_{n+1} =
    a_{v_i,v_j}=\begin{cases}
        1, \,\, \mbox{if} \,\, i=0, j=1,\ldots,n, \\
         1, \,\, \mbox{if} \,\, j=0, i=1,\ldots,n, \\
        0, \,\, \mbox{otherwise}.
    \end{cases}. $$
Note that \textbf{(d,r)} is an arithmetical structure on $S_n$ iff $(D-A_{n+1})\textbf{r} = 0$, where $D = \mathrm{diag} (d_0, d_1, d_2, \cdots, d_{n-1}, d_{n})$ is a diagonal matrix iff the following equalities hold:
\begin{equation}\label{srdstructure}
\begin{aligned}
    d_i r_i & = r_0  \\
         d_0 r_0 &  = r_1+ r_2+ r_{3}+ \cdots +r_n
\end{aligned} 
\end{equation}
Thus we have a primitive positive integer $\textbf{r}= (r_0, r_1, r_2, \ldots, r_n) $  is an arithmetical $r$-structure on $S_n$ iff
\begin{equation}\label{rstructure1}
\begin{aligned}
 & r_i |r_0 \text{  for all   } i \in [n]\\
 & r_0 | r_1+r_2+r_3 +\cdots+r_n   
\end{aligned}
\end{equation}

Observe that $\sum_{i=1}^{n} \frac{1}{d_i} = \sum_{i=1}^{n} \frac{r_i}{d_ir_i}
= \sum_{i=1}^{n} \frac{r_i}{r_0} = d_0$, which is mentioned in Corrales and Valencia [7]. Also,  in the reverse direction, given a solution of the form $d_0 = \sum_{i=1}^{n} \frac{1}{d_i} $, setting $r_0 = lcm(d_1, d_2, \ldots, d_n)$ and $r_i =r_0/d_i$ for all $i \in [n]$ gives an arithmetical structure on $S_n$. Solutions of Diophantine equation have been much
studied, sometimes under the name Egyptian fractions with the assumption that the $d_i$'s are distinct, but many open questions about them remain, see, for example \cite{Egyptianfraction1,EgyptianfractionNew2,Egyptianfraction4}, and the references therein. 
Note that the numbers of solutions for $d_0 = \sum_{i=1}^{n} \frac{1}{d_i}$  for $n \leq 8$ are given by sequence A280517 in \cite{Sloane18}. The number of solutions for $n \leq 8$ is as follows: $1, 2, 14, 263, 13462, 2104021.$

\begin{theorem}\label{arithSn-1to Sn+1}
 Let $S_{n-1}$ be the star graph with $n$ vertices $v_1, \ldots, v_n$, where $v_1$ is centre vertex.  If $(\d,\r)$ is an arithmetical structure on $S_{n-1}$, then $S_n$ has an arithmetical structure $(\tilde{\d},\tilde{\r})$ given by 
 $$  \tilde{\textbf{d}_i}  = \begin{cases}
         1, & \text{if}\;i = n+1\\
         d_{1}+ 1, & \text{if}\; i = 1\\
        \textbf{d}_{i},  & \text{Otherwise.}\\    
   \end{cases}$$
   $$\tilde{\textbf{r}_i}  =  \begin{cases}
       r_{1}, &\text{if}\;  i = n+1 \\
        \textbf{r}_{i}, & Otherwise.
   \end{cases}
$$
\end{theorem}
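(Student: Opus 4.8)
The plan is to verify directly that the proposed pair $(\tilde{\textbf{d}},\tilde{\textbf{r}})$ satisfies the defining equations of an arithmetical structure on $S_n$. With the center relabelled $v_1$ (rather than $v_0$ as in \eqref{srdstructure}), these equations split into the leaf conditions $\tilde{d}_i\tilde{r}_i=\tilde{r}_1$ for each leaf $v_i$, together with the center condition $\tilde{d}_1\tilde{r}_1=\sum_{i\ge 2}\tilde{r}_i$. First I would record the two families of hypotheses coming from $(\textbf{d},\textbf{r})\in \mathrm{Arith}(S_{n-1})$: the leaf relations $d_ir_i=r_1$ for $i=2,\dots,n$, and the center relation $d_1r_1=r_2+\cdots+r_n$. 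Everything then reduces to checking that the prescribed increment at the center exactly compensates for the newly added pendant leaf.

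For the leaf equations the old leaves $v_2,\dots,v_n$ carry unchanged $\tilde d$ and $\tilde r$ values, so $\tilde{d}_i\tilde{r}_i=d_ir_i=r_1=\tilde{r}_1$ holds verbatim. For the new leaf $v_{n+1}$ I would use $\tilde{d}_{n+1}=1$ and $\tilde{r}_{n+1}=r_1=\tilde{r}_1$, giving $\tilde{d}_{n+1}\tilde{r}_{n+1}=r_1=\tilde{r}_1$ as required. The single step I would flag as the crux is the center condition, since it is the only equation that uses both modifications simultaneously: here $\tilde{d}_1\tilde{r}_1=(d_1+1)r_1=d_1r_1+r_1$, and by the center hypothesis this equals $(r_2+\cdots+r_n)+r_1$, which is precisely $\tilde{r}_2+\cdots+\tilde{r}_n+\tilde{r}_{n+1}$ because $\tilde{r}_{n+1}=r_1$. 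Thus the increment $d_1\mapsto d_1+1$ absorbs exactly the extra summand $r_1$ contributed by the new leaf, and the equation closes.

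Finally I would dispatch the two side conditions. Positivity is immediate, since $\tilde{d}_{n+1}=1$, $\tilde{d}_1=d_1+1$, and the remaining entries agree with the positive entries of $\textbf{d}$ and $\textbf{r}$. Primitivity of $\tilde{\textbf{r}}$ follows because its coordinates are exactly those of $\textbf{r}$ together with one repeated copy of $r_1$, so $\gcd(\tilde r_1,\dots,\tilde r_{n+1})=\gcd(r_1,\dots,r_n)=1$. I would close by remarking that the whole construction is the special case of the pendant-edge (i.e.\ $1$-clique) clique-star corollary recalled earlier, applied with $v=v_1$ and $v'=v_{n+1}$; hence the statement also follows at once from that corollary, the direct computation above being included only for self-containment. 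The proof involves no real obstacle beyond correctly bookkeeping the index shift of the center.
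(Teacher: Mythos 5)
Your proposal is correct and follows essentially the same route as the paper: the paper writes the adjacency matrix of $S_n$ in block form as $\bigl(\begin{smallmatrix} A & e_1 \\ e_1^{T} & 0 \end{smallmatrix}\bigr)$ and asserts that $(\diag(\tilde{\textbf{d}})-\tilde{A})\tilde{\textbf{r}}=0$ holds ``by direct computation,'' which is exactly the coordinate-wise verification you carry out. Your write-up is in fact slightly more complete than the paper's, since you also check primitivity of $\tilde{\textbf{r}}$ explicitly and correctly identify the result as the pendant-edge (1-clique) special case of the clique-star corollary the paper quotes from Corrales--Valencia.
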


\begin{proof}
    Let $A$ be the adjacency matrix of $S_{n-1}$. Observe that adjacency matrix of $S_n$ is given by 
   $$\tilde{A}= \begin{pmatrix}
    A & e_1 \\
    e_{1}^{T} & 0
\end{pmatrix},$$ where $e_1$ is the column vector with $1$-th entry is $1$ and all other entries are $0$.
 By direct computation one can verify that $(\diag(\Tilde{\d}) - \Tilde{A} )\tilde{\r} = 0$. Hence, the result follows. 
\end{proof}

Now, consider a complete graph $K_n$ on $n$ vertices then the clique on $G$ is the set of all vertices of $K_n$. Then by using the clique-star operation on $K_n$ we get star graph $S_n$. Thus we have the following result.

\begin{theorem}\label{arithKntoSn}
Let $K_n$ be a complete graph with vertex set $V= \{v_1, v_2, \ldots, v_n\}$ and $(\textbf{d},\textbf{r}) \in Arith (K_n)$. Define
$$\tilde{\textbf{d}}_u  = \begin{cases}
  \textbf{d}_u + 1 \ &if \ u \in V\\
  1 \ &if \ u = v,
\end{cases}
\ \ \ \ 
\tilde{\textbf{r}}_u  = \begin{cases}
\textbf{r}_u \ & if \ u \in V\\
\sum _{u \in V} \textbf{r}_u \ &if \ u = v,
\end{cases}
$$ where V is the set of vertices of $G$.   Then $(\tilde{\textbf{d}},\tilde{\textbf{r}} )$ is an arithmetical structure on $S_n.$
\end{theorem}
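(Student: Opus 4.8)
The plan is to recognize that the star graph $S_n$ is precisely the clique-star transform of the complete graph $K_n$ taken with respect to its full vertex set, that is $S_n = cs(K_n, V)$ with $C = V = \{v_1, \ldots, v_n\}$ (every subset of vertices of $K_n$ is a clique, in particular $V$ itself). Under this identification, deleting all edges among $v_1, \ldots, v_n$ and introducing the new apex vertex $v$ adjacent to each $v_i$ produces exactly the star with center $v$. Comparing the formulas for $\tilde{\textbf{d}}$ and $\tilde{\textbf{r}}$ in the statement with the general clique-star assignment in equation (\ref{cliquearith}), they coincide term by term once one sets $C = V$. Consequently the result should follow immediately from Theorem \ref{Cique Star Transform by Corrales and Valencia}.

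For a self-contained argument I would instead verify directly that $(\diag(\tilde{\textbf{d}}) - \tilde{A})\tilde{\textbf{r}} = 0$, where $\tilde{A}$ is the adjacency matrix of $S_n$. First I would handle the apex equation: since $\tilde{d}_v = 1$ and $\tilde{r}_v = \sum_{j} r_j$, the row corresponding to the center $v$ reads $\tilde{d}_v \tilde{r}_v = \sum_{j} r_j$, which holds trivially. Next, for each leaf $v_i$ (now adjacent only to $v$), the corresponding equation is $(d_i + 1) r_i = \tilde{r}_v = \sum_j r_j$, equivalently $d_i r_i = \sum_{j \neq i} r_j$. This is exactly the $i$-th defining relation of the arithmetical structure $(\textbf{d}, \textbf{r})$ on $K_n$, because in $K_n$ the vertex $v_i$ is adjacent to every $v_j$ with $j \neq i$. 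Hence all row equations are satisfied.

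Finally I would confirm that $\tilde{\textbf{r}}$ is a valid primitive positive integer vector: all its coordinates are positive (each $r_i$ is positive and their sum is positive), and since $\tilde{\textbf{r}}$ contains every coordinate of the primitive vector $\textbf{r}$, its greatest common divisor divides $\gcd(\textbf{r}) = 1$, so $\tilde{\textbf{r}}$ is primitive; likewise $\tilde{\textbf{d}}$ is a positive integer vector. I do not anticipate any genuine obstacle here. The only care required is the bookkeeping that matches the complete-graph relations $d_i r_i = \sum_{j \neq i} r_j$ with the leaf equations after the transformation, together with the observation that the apex row is automatically satisfied because its degree entry is set to $1$.
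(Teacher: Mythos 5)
Your proposal is correct, and its first paragraph is exactly the paper's proof: the paper disposes of this theorem in one line by taking the clique $C$ to be the entire vertex set of $K_n$ and invoking the clique-star assignment (\ref{cliquearith}) together with Theorem \ref{Cique Star Transform by Corrales and Valencia}. Your second, self-contained verification goes beyond what the paper records and is also correct: the apex row reads $1\cdot\sum_j r_j=\sum_j r_j$, each leaf row $(d_i+1)r_i=\sum_j r_j$ rearranges to the $K_n$ relation $d_i r_i=\sum_{j\neq i} r_j$, and primitivity of $\tilde{\textbf{r}}$ follows since it contains all coordinates of the primitive vector $\textbf{r}$. The direct check buys independence from the Corrales--Valencia machinery (and implicitly reproves the special case of their theorem being used), whereas the paper's citation-based argument is shorter but leaves the row-by-row bookkeeping to the reader; either argument is complete on its own.
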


\begin{proof}
Consider the clique as the set of all vertices of  $K_n$. Then the result directly follows from equation~\ref{cliquearith}. 
\end{proof}

\begin{exam}
Consider $K_3$, which corresponds to the cycle $C_3$. From \cite{BHDNJC18}, the arithmetical structure $(\d, \r)$ on $C_3$ is given by 
\begin{center}
    \begin{tabular}{ | l | l | l| p{5cm} | }
    \hline
   $\d = (d_1, d_2, d_3)$ & \r= $(r_1, r_2, r_3)$  & \#\\
   \hline
(2, 2, 2) & (1, 1, 1) & 1 \\
(3, 3, 1) & (1, 1, 2) & 3 \\
(5, 2, 1) & (1, 2, 3) & 6 \\
     \hline
    \end{tabular}
\end{center}
Now, by Theorem~\ref{arithKntoSn} some of the arithmetical structures $(\tilde{\d}, \tilde{\r})$ of $S_3$ are the following:
\begin{center}
    \begin{tabular}{ | l | l | l| p{5cm} | }
    \hline
   $\tilde{\d} = (\tilde{d}_0, \tilde{d}_1, \tilde{d}_2, \tilde{d}_3)$ & $\tilde{\r}= \tilde{r}_0, \tilde{r}_1, \tilde{r}_2,\tilde{r}_3)$  & \#\\
   \hline
(1, 3, 3, 3) & (3, 1, 1, 1) & 1 \\
(1, 4, 4, 2) & (4, 1, 1, 2) & 3 \\
(1, 6, 3, 2) & (6, 1, 2, 3) & 6 \\
\hline
(3, 1, 1, 1) & (1, 1, 1, 1) & 1 \\
     \hline
    \end{tabular}
\end{center}
Note that $S_3$ has $14$ arithmetical structures.
\end{exam}


Using \ref{srdstructure} and Theorem~ (\ref{arithSn-1to Sn+1}) we have the following. 

\begin{exam}\label{ASS4}
The star graph $S_4$ has $263 $ arithmetical structures. We have listed $232$, namely 
\begin{center}
    \begin{tabular}{ | l| l| p{5cm} | }
    \hline
$\tilde{\d} =(d_0, d_1, d_2, d_3, d_4)$ & $\tilde{\r} = (r_0, r_1, r_2, r_3, r_4)$  & \# \\
   \hline
  (1, 4, 4, 4, 4) & (4, 1, 1, 1, 1) &  1 \\
(1, 6, 3, 6, 3) & (6, 1, 2, 1, 2) &  6 \\
 (1, 6, 6, 6, 2) & (6, 1, 1, 1, 3) & 4\\
 (1, 12, 3, 3, 4) & (12, 1, 4, 4, 3) & 12 \\
 (1, 12, 2, 6, 4) & (12,1, 6, 2, 3) &  24 \\
 (1,8, 8, 4, 2) & (8,1, 1, 2, 4) & 12\\
 (1,12, 12, 2, 3) & (12,1, 1, 6, 4) & 12 \\
  (1,10, 5, 5, 2, 1) & (10,1 2, 2, 5) &  12\\
 (1, 12, 4, 6, 2 )  & ( 12, 1 , 3 , 2, 6) & 24\\ 
(1, 6, 3, 4, 4) & (12 , 2, 4, 3, 3) & 12\\
( 1, 18, 9, 3, 2) & ( 18, 1, 2, 6, 9) & 24\\
 ( 1, 20, 4, 5, 2) & ( 20, 1, 5, 4, 10) & 24\\
( 1, 24, 8, 3, 2) & ( 24, 1, 3, 8, 12) & 24\\
 ( 4, 1, 1, 1, 1 ) & (1, 1, 1, 1, 1) &   1\\
( 2, 3, 3, 3, 1 ) & (3, 1, 1, 1, 3) & 4  \\
 ( 2, 4, 4, 2, 1 ) & (4, 1, 1, 2, 4) &  12 \\
 ( 2, 6, 3, 2, 1 ) & (6, 1, 2, 3, 6) &  24  \\
 \hline
    \end{tabular}
\end{center}
\end{exam}

\subsection{Relation between Arrow Star graph $AS_n$ and star graph $S_n$}

The star graph $S_n$ consists of $n$-leaves $v_1$, $\cdots$, $v_n$ attached to a central vertex  $v_0$, and it can be obtained from the complete graph $K_n$ on n-vertices by a clique-star transformation. Similarly, the arrow-star graph $AS_n$ is derived from the fan graph $F_n$ through a series of suitable clique-star transformations.

To derive $AS_n$ from $S_n$, we apply a specific transformation: attach two pendant edges $u_i$ and $u_j$ to each leaf  $v_i$ of $S_n$. Repeating this process $2n$ times results in $AS_n$.

Alternatively, $AS_n$  can also be derived directly from $F_n$  by performing $n$ clique-star transformations using a $3$-clique $C$. Thus, 
$AS_n$ can be constructed either by $n$ clique-star transformations from 
$F_n$ or by $2n$ pendant-edge additions to $S_n$.

Therefore, $AS_n = cs_n(F_n,C)$ and $AS_n = cs_{2n}(S_n,C),$ which implies $cs_n(F_n,C) = cs_{2n}(S_n,C).$

So, we have the following observation:

\begin{observation}
We have
\begin{enumerate}
 \item $\mathcal{A}(F_n) \subset \mathcal{A}(AS_n)$.  
 \item $\mathcal{A}(S_n) \subset \mathcal{A}(AS_n)$.
 \item $\mathcal{A}(cs_n(F_n,C)) = \mathcal{A}(cs_{2n}(S_n,C)) $. 
 \item $|\mathcal{A}(F_n)| < |\mathcal{A}(AS_n)|$ and  $|\mathcal{A}(S_n)| < |\mathcal{A}(AS_n)|$.
\end{enumerate}
\end{observation}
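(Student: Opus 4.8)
The four statements are all consequences of the iterated clique-star correspondence developed in Section~3, combined with the two representations $AS_n = cs_n(F_n,C)$ and $AS_n = cs_{2n}(S_n,C)$ recorded just above. Throughout I read an inclusion such as $\mathcal{A}(F_n)\subset\mathcal{A}(AS_n)$ as asserting a canonical injection that identifies $\mathcal{A}(F_n)$ with a distinguished subfamily of $\mathcal{A}(AS_n)$, since structures on the two graphs are vectors of different lengths.

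For (1) I would compose clique-star maps. Theorem~\ref{Cique Star Transform by Corrales and Valencia} identifies $\mathcal{A}(G)$ bijectively with the set $\mathcal{A}'(cs(G,C))$ of arithmetical structures on $cs(G,C)$ whose newly added vertex carries $\textbf{d}$-value $1$. Iterating this $n$ times along the tower $F_n = cs_0(F_n,C), cs_1(F_n,C), \dots, cs_n(F_n,C) = AS_n$, which is legitimate by Theorem~\ref{Relation between cs_n and cs_{n+1}} using the $n$ three-cliques of $F_n$ in succession, yields an injection $\mathcal{A}(F_n)\hookrightarrow\mathcal{A}(AS_n)$ whose image is exactly those structures in which the $n$ hub vertices $u_1,\dots,u_n$ all carry $\textbf{d}$-value $1$. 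This proves (1); statement (2) is obtained identically by chaining $2n$ pendant-edge (one-clique) maps along $AS_n = cs_{2n}(S_n,C)$, its image being the structures in which the $2n$ pendant vertices all carry $\textbf{d}$-value $1$.

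Statement (3) needs no computation: $cs_n(F_n,C)$ and $cs_{2n}(S_n,C)$ are literally the same graph $AS_n$, and $Arith$ depends only on the underlying graph, so the two families coincide.

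For (4), the correspondences of (1)--(2) already give $|\mathcal{A}(F_n)| \le |\mathcal{A}(AS_n)|$ and $|\mathcal{A}(S_n)| \le |\mathcal{A}(AS_n)|$; the real content, and the step I expect to be the main obstacle, is strictness. The cleanest route is to telescope the strict monotonicity $|Arith(cs_k(G,C))| < |Arith(cs_{k+1}(G,C))|$ noted earlier along the two towers, which upgrades both $\le$ to $<$. Producing explicit witnesses of strictness requires a little care, because the two embeddings have different images: the Laplacian structure on $AS_n$ (all-ones $\textbf{r}$, each hub carrying its degree $3 > 1$) lies outside the $F_n$-image but \emph{inside} the $S_n$-image, so for the $S_n$ bound one must instead exhibit a structure with some pendant $v_j$ satisfying $d_{v_j} = r_{u_i}/r_{v_j} > 1$, which is easily arranged from a non-constant $\textbf{r}$.
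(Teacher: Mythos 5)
Your proposal is correct and follows exactly the route the paper intends: the observation is stated there without proof, as an immediate consequence of the clique-star correspondence (Theorem~\ref{Cique Star Transform by Corrales and Valencia}, iterated via Theorem~\ref{Relation between cs_n and cs_{n+1}}) together with the two tower representations $AS_n = cs_n(F_n,C) = cs_{2n}(S_n,C)$, which is precisely the machinery you invoke. If anything, your handling of part (4) is more careful than the paper's, which merely asserts $|Arith(cs_k(G,C))| < |Arith(cs_{k+1}(G,C))|$ without observing, as you do, that the Laplacian witness works only for the $F_n$-tower (where new vertices have degree $3$) and that the $S_n$-tower of pendant additions needs a separate witness with $d>1$ at a pendant, e.g.\ the structure on $AS_n$ with $r$-value $2$ at the centre and middle vertices and $1$ at the pendants.
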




Recall that the arithmetical structures on $S_n$ correspond bijectively to the positive integer solutions of the equation 
$ \sum _{i=1} ^ n d_i = d_0,$ where $(d_0, d_1, \cdots, d_n)$ is a 
$d$-labeling of the star graph $S_n$ with central vertex $v_0.$

Building on this, a portion of the set $\mathcal{A}(AS_n)$ (arithmetical structures on the arrow-star graph $(AS_n)$ ) is in bijection with Egyptian fraction representations. This connection arises due to the structural similarities between the labeling rules and the decomposition of a number into a sum of unit fractions, as demonstrated by Theorem~\ref{Relation between cs_n and cs_{n+1}}. Thus we have the following theorem.

\begin{theorem}
Given a star graph $S_n$ and $v$ one of its pendant  vertex, let ${cs(S_n)}$ be the graph resulting from adding the edge $vv'$. If $(\textbf{d},\textbf{r})$ is an arithmetical structure of $S_n$, then
$$
\tilde{\textbf{d}}_u = cs(\textbf{d}, C)_u = \begin{cases}
  1 \ &if \ u = v'\\ 
  \textbf{d}_u + 1 \ &if \ u = v\\
  \textbf{d}_u \ &otherwise
\end{cases}
,\ \ \ \ 
\tilde{\textbf{r}}_u = cs(\textbf{r}, C)_u = \begin{cases}
\textbf{r}_v  \ & if \ u = v'\\
\textbf{r}_u \ &otherwise
\end{cases}
$$ is an arithmetical structure on $cs({S_n})$. In particular if $\mathcal{A}'(cs(S_n))$ is the set of arithmetical structures $(\textbf{d}',\textbf{r}')$ of $cs(S_n)$ with $\textbf{d}'_v = 1$, then
$\mathcal{A}' (cs(S_n)) = \{ (\tilde{\textbf{d},\tilde{\textbf{r}}}) \ | \  ( \textbf{d},\textbf{r}) \ \in \mathcal{A}(S_n)\} .$
\end{theorem}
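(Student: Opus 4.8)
The plan is to read this theorem as the specialization to $G = S_n$ of the clique-star transform applied to the $1$-clique $C = \{v\}$. Adding the pendant edge $vv'$ is precisely the operation $cs(S_n, \{v\})$: inside a singleton clique there are no edges to delete, so one simply attaches a new vertex $v'$ adjacent to $v$. Under this identification the stated formulas for $\tilde{\textbf{d}}$ and $\tilde{\textbf{r}}$ are exactly equation (\ref{cliquearith}) with $C = \{v\}$, so the first assertion is an immediate instance of the pendant-edge corollary recalled earlier, and the second is an instance of Theorem \ref{Cique Star Transform by Corrales and Valencia}.

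For a self-contained argument I would verify the first claim directly. Writing $A$ for the adjacency matrix of $S_n$, the adjacency matrix of $cs(S_n)$ has the block form $\tilde{A} = \begin{pmatrix} A & e_v \\ e_v^T & 0 \end{pmatrix}$, with $e_v$ the standard basis vector at $v$. I would then check $(\diag(\tilde{\textbf{d}}) - \tilde{A})\tilde{\textbf{r}} = 0$ one row at a time. The new row at $v'$ reads $1\cdot r_v - r_v = 0$. The row at $v$ reads $(d_v+1)r_v - r_v - r_0$, where $r_0$ is the label of the unique neighbour of the leaf $v$ in $S_n$; this equals $d_v r_v - r_0$, which vanishes because the leaf equation $d_v r_v = r_0$ already holds in $(\textbf{d},\textbf{r})$ (see equation (\ref{srdstructure})). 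Every remaining row is untouched: for $u \neq v, v'$ the degree, the label, and the neighbourhood of $u$ are all unchanged, so the row reproduces the corresponding equation of $(\textbf{d},\textbf{r})$ on $S_n$. Positivity of $\tilde{\textbf{d}}$ and $\tilde{\textbf{r}}$ is clear, and $\tilde{\textbf{r}}$ remains primitive since it only repeats the entry $r_v$, leaving $\gcd(\tilde{\textbf{r}}) = \gcd(\textbf{r}) = 1$.

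For the ``in particular'' statement I would invoke Theorem \ref{Cique Star Transform by Corrales and Valencia} with $C = \{v\}$. Since $S_n$ is connected, hence strongly connected in the sense required there, the theorem identifies the set $\mathcal{A}'(cs(S_n))$ of arithmetical structures on $cs(S_n)$ with label $1$ at $v'$ with exactly $\{(\tilde{\textbf{d}},\tilde{\textbf{r}}) \mid (\textbf{d},\textbf{r}) \in \mathcal{A}(S_n)\}$; the forward inclusion is supplied by the first part, while the reverse inclusion together with injectivity of the correspondence is the content of the cited theorem.

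The only thing requiring care is bookkeeping rather than mathematics: one must confirm that the singleton-clique case of the clique-star operation really yields a pendant edge, so that the stated formulas match (\ref{cliquearith}), and that the connectivity hypothesis of Theorem \ref{Cique Star Transform by Corrales and Valencia} is satisfied by $S_n$. No new idea beyond the established transform is needed; the substance is simply the identification of pendant-edge addition with the $1$-clique-star operation.
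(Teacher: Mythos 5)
Your proposal is correct and matches the paper's treatment: the paper offers no independent proof, presenting the theorem as an immediate consequence of the pendant-edge (1-clique) case of the clique-star transform of \cite{HCEV18} (equation (\ref{cliquearith}) and Theorem \ref{Cique Star Transform by Corrales and Valencia}), which is exactly the identification you make. Your additional row-by-row verification of $(\diag(\tilde{\textbf{d}})-\tilde{A})\tilde{\textbf{r}}=0$, including the leaf equation $d_v r_v = r_0$ and the preservation of primitivity, is sound and goes beyond what the paper writes down, but it follows the same underlying route rather than a different one.
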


\section{Critical Group of Arithmetical Structure on Fan Graphs}
Let $L$ be the Laplacian matrix of a graph $G$. Recall that we can interpret $L$  as a linear map from $ \mathbb{Z}^n \rightarrow \mathbb{Z}^n, $ with its kernel generated by the transpose of the all-ones vector $\textbf{r} = (1, 1, \cdots, 1)$. The cokernel  $\Z^n/\Im(L)$ is a finitely generated abelian group. The critical group $ \Phi(G)$ of a given arithmetical structure $(\textbf{d}, \textbf{r})$ on  $G$ is defined as the torsion part of the cokernel of $L$. To compute this group, key properties of the Smith normal form of a matrix.

Given an $ n\times n$ matrix $M$ with integer entries, there exist invertible $n\times n$  integer matrices $S$ and $T$ such that $SMT = D,$ where $D = \diag(\alpha_1, \alpha_2, \cdots, \alpha_t, 0, 0, \cdots, 0)$ is a diagonal matrix, $t = rank (M)$, each $\alpha_k$ is a positive integer, and $\alpha_k$ divides $\alpha_{k+1}$ for all $k \in [t-1]$. This diagonal matrix is the Smith normal form of $M$, and the entries $\alpha_k$ are the invariant
factors of $M$. The cokernel $\Z^n/\Im(M)$ is isomorphic to $\mathbb{Z}^{n-t} \oplus \  (\bigoplus _{k=1}^t \mathbb{Z} / \alpha_k\mathbb{Z})$. Writing $D_k = D_k(M)$  as the greatest common divisor of all $k \times k$ minors of $M$ and defining $D_0(M) = 1$. We have $\alpha_{k} = D_{k}(M)/ D_{k-1}(M)$. In the case of $L(G,\textbf{d})$, the matrix $L = \diag(\textbf{d}) - A$ associated to an arithmetical structure $(\textbf{d}, \textbf{r})$ on a graph $G$ has rank $n-1$ as showed by Lorenzini [\cite{LD89}, Proposition 1.1]. We get that in this case the cokernel $\Z^n/\Im(L)$ is isomorphic to $\mathbb{Z} \oplus \  (\bigoplus _{k=1}^{n-1} \mathbb{Z} / \alpha_k\mathbb{Z})$. The critical group of $\Phi (G)$ is defined to be this torsion part of the cokernel $\Z^n/\Im(L)$. i.e.,
$$ \Phi (G) \cong \bigoplus _{k=1}^{n-1} \mathbb{Z} / \alpha_k\mathbb{Z}.$$ The order of $\Phi(G)$ is $D_{n-1}(M) = \alpha_1 \cdots \alpha_{n-1} $, and this integer is a well-known graph invariant, the number $\kappa(G)$, the number of spanning trees of $G$, also called the complexity of $G$. We have factorization of $\kappa(G) = D_1 . D_2/D_1...D_{n-1}/D_{n-2}$.

The group $\phi(G)$ can  be generated by at most $n - 1$ generators and  at most $\beta(G) = m - n + 1$ generators \cite{lorenzini1991finite}, page 281. The relationship between the eigenvalues of $L$ and the structure of the critical group $\Phi(G)$ has also been studied (see \cite{lorenzini2008criticalgroupandeigenvalues}). The eigenvalues of $L$ do contribute to the structure of $\Phi(G)$ but do not completely determine it. For certain graph families, the critical group is known to have specific properties: it is trivial for path graphs \cite{BHDNJC18}, cyclic for cycle graphs \cite{BHDNJC18}, and its structure for complete graphs $K_n$ and star graphs $S_n$ is discussed in \cite{criticalgroupon(starandcompletegraph)}. The critical group of arithmetical structures on conference graphs with $n$ squarefree vertices is studied in \cite{lorenzini2008criticalgroupandeigenvalues}. Determining the explicit structure of $\Phi(G)$ for a given family of graphs is not always straightforward, and recent literature has focused on explicitly determining the structure of these groups across various graph classes.

Now, the next result shows that critical group of $S_{n-1}$ and $S_n$ are isomorphic. That is,  the structure of the critical group remains unchanged when transitioning from a star graph with $n-1$ vertices to one with $n$ vertices, showcasing a certain invariance under this graph transformation.

\begin{theorem}
Let $S_{n-1}$ be the star graph with $n$ vertices $\{v_1, \ldots, v_n\}$, where $v_1$ is centre vertex. If $(\textbf{d}, \textbf{r})$ is an arithmetical structure on $S_{n-1}$, and  $(\tilde{\textbf{d}}, \tilde{\textbf{r}})$ is an arithmetical
structure of $S_{n}$,  then $\Phi(S_{n-1})\cong \Phi(S_{n}). $
\end{theorem}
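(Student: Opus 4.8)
The plan is to compute the cokernel of $L(S_n,\tilde{\d})$ directly and match it with that of $L(S_{n-1},\d)$ by eliminating the new pendant vertex through integer row and column operations; equivalently, to take a Schur complement across the unit diagonal entry it contributes. From the proof of Theorem~\ref{arithSn-1to Sn+1}, the adjacency matrix of $S_n$ is $\tilde A=\left(\begin{smallmatrix} A & e_1\\ e_1^{t} & 0\end{smallmatrix}\right)$, with $A$ the adjacency matrix of $S_{n-1}$ and $e_1$ the indicator of the centre $v_1$. Since $\tilde{\d}=(d_1+1,d_2,\dots,d_n,1)$, first I would record the block form
\begin{equation*}
L(S_n,\tilde{\d})=\diag(\tilde{\d})-\tilde A=\begin{pmatrix} L(S_{n-1},\d)+e_1e_1^{t} & -e_1\\ -e_1^{t} & 1\end{pmatrix},
\end{equation*}
using that $\diag(d_1+1,d_2,\dots,d_n)-A=L(S_{n-1},\d)+e_1e_1^{t}$ and that the new vertex carries diagonal value $1$ and is adjacent only to $v_1$.

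Next I would exploit the fact that the $(n{+}1,n{+}1)$ entry is $1$, a unit in $\Z$, so it serves as a pivot for elimination over $\Z$. Clearing the last column by the row operations $R_i\mapsto R_i-(-e_1)_i R_{n+1}$ and then the last row by the column operations $C_j\mapsto C_j-(-e_1)_j C_{n+1}$ --- every multiplier being an integer precisely because the pivot equals $1$ --- block-diagonalizes the matrix as $\left(\begin{smallmatrix} B & 0\\ 0 & 1\end{smallmatrix}\right)$, where $B$ is the Schur complement $B=\bigl(L(S_{n-1},\d)+e_1e_1^{t}\bigr)-(-e_1)(1)^{-1}(-e_1^{t})=L(S_{n-1},\d)$. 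The point to emphasize is the exact cancellation: the rank-one term $e_1e_1^{t}$ forced by incrementing $d_1$ is annihilated by the rank-one Schur contribution $(-e_1)(-e_1^{t})=e_1e_1^{t}$ coming from the new edge, leaving $B$ equal to $L(S_{n-1},\d)$ with no correction.

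It then follows that $\Z^{n+1}/\Im L(S_n,\tilde{\d})\cong\bigl(\Z^{n}/\Im L(S_{n-1},\d)\bigr)\oplus\Z/1\Z\cong\Z^{n}/\Im L(S_{n-1},\d)$, and passing to torsion subgroups gives $\Phi(S_n)\cong\Phi(S_{n-1})$. I expect the only real care to be bookkeeping: confirming the block decomposition of $L(S_n,\tilde{\d})$ and that each elimination multiplier lies in $\Z$ (guaranteed by the unit pivot), after which the rank-one cancellation is automatic and no genuine difficulty remains. As a shortcut, one may instead observe that adjoining this pendant edge is exactly the clique-star operation $cs(S_{n-1},\{v_1\})$ on the $1$-clique $\{v_1\}$, so the isomorphism is immediate from the invariance of critical groups under clique-star transforms [\cite{HCEV18}, Theorems~5.2 and~5.3].
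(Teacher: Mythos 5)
Your proposal is correct and takes essentially the same route as the paper: the paper's proof multiplies $L(S_n,\tilde{\textbf{d}})$ on the left by $\left(\begin{smallmatrix} I_n & e_1\\ 0 & 1\end{smallmatrix}\right)$ and on the right by $\left(\begin{smallmatrix} I_n & 0\\ e_1^{T} & 1\end{smallmatrix}\right)$, which are precisely your integer row and column operations pivoting on the unit entry of the pendant vertex, and it exhibits the same rank-one cancellation $E_{11}+\diag(\textbf{d})-A-E_{11}$, leaving the block-diagonal matrix $\left(\begin{smallmatrix} \diag(\textbf{d})-A & 0\\ 0 & 1\end{smallmatrix}\right)$ and hence isomorphic torsion cokernels. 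Your alternative shortcut via the clique-star transform $cs(S_{n-1},\{v_1\})$ and the invariance results of [HCEV18] is also valid and consistent with what the paper itself cites elsewhere.
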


\begin{proof}
Let $A$ be the adjacency matrix of $S_{n-1}$ and $\tilde{A}$  be the adjacency of $S_n$. The we have $$\diag(\tilde{\textbf{d}} )- \tilde{A} = \begin{pmatrix}
 d_{1}+1 &  & & & & \\
&   d_{2}  & & & & \\
&    & & \ddots &  & \\
&    & &  & d_{n} & \\ 
&    & &  &  & 1\\ 
\end{pmatrix} - \begin{pmatrix}
    A & e_1\\
    e_{1}^{T} & 1
\end{pmatrix} $$

$$ = \begin{pmatrix}
    I_n & e_1\\
    0 & 1
\end{pmatrix} \begin{pmatrix}
    E_{11} + \diag(\textbf{d}) - A & -e_1\\
    -e_{1}^{T} & 1
\end{pmatrix} \begin{pmatrix}
    I_n & 0\\
    e_{1}^{T} & 1
\end{pmatrix} $$ $$= \begin{pmatrix}
    I_n & e_1\\
    0 & 1
\end{pmatrix} \begin{pmatrix}
    E_{11} + \diag(\textbf{d}) - A -E_{11} & -e_1\\
    0 & 1
\end{pmatrix} 
 = \begin{pmatrix}
    diag(\textbf{d}) - A & 0\\
    0 & 1
\end{pmatrix},$$ where $E_{11}$ is $n \times n$ matrix with $(1,1)$th entry is $1$ and all other entries are zero.
So, $diag(\tilde{\textbf{d}} )- \tilde{A}$ and $\begin{pmatrix}
    \diag(\textbf{d}) - A & 0\\
    0 & 1
\end{pmatrix}$  are integrally equivalent. Hence proved.
\end{proof}

Now, we will  find structure of the critical group of arithmetical structures on the fan graph $F_n$.
Note that Fan graph $F_n$ is made up of cycle graph $C_3$ by gluing exactly one vertex of the cycle graph with each  other. To determine the structure of $\Phi(G)$ in case of $F_n$ we will observe the following facts:

Let $G$ and $G'$ be any two connected graphs. Let $v$ denote a vertex of $G$ and $w$ denote a vertex of $G'$. Let $\Gamma$ be the graph obtained as the union of $G$ and $G'$ with the vertex $v$ identified with the vertex $w$. Number the vertices of $G $ as $v_1$, $v_2$, $\cdots$, $v_i = w$ and vertices of $G'$ as $v_i = w$, $v_{i+1}$, $\cdots$, $v_{n}$. We see that Laplacian $L$ of $\Gamma$ is almost made up of two blocks. By adding all the rows of $L$ to the $i^{th}$ row and all the columns of $L$ to the $i^{th}$ column, $L$ becomes equivalent to a matrix made up of two disjoint blocks. The Smith normal form of the top left (respectively bottom right) corner block can be used to compute the Smith normal form of the laplacian of $G$ ( respectively $G')$. Hence we can conclude that $\Phi(\Gamma) \cong \Phi(G) \times \Phi(G')$. 

Hence, we have the following theorem.

\begin{theorem}
 Let $F_n$ be a fan graph on $2n+1$ vertices $v_0$, $v_1$, $\cdots$, $v_{2n}$ where $v_0$ is the central vertex with degree $2n$. Let $A_i$ be the $i^{th}$ cyclic arm of $F_n$ with vertices $v_{i1}$, $v_{i2}$ $v_{i3}$. Let vertex $v_{i1}$ of arm $A_i$ be glued with vertex $v_{(i+1)1}$ of arm $A_{i+1}$ for $ 0 < i < n$. Then $\Phi(F_n) \cong \Phi (A_1) \times \Phi (A_2) \times \cdots \times \Phi(A_n)$.
\end{theorem}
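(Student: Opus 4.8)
The plan is to prove the statement by induction on the number of arms $n$, treating $F_n$ as an iterated vertex-gluing of its arms and invoking the gluing principle established immediately above, namely that if a connected graph $\Gamma$ is formed from two connected graphs $G,G'$ by identifying a single vertex of $G$ with a single vertex of $G'$, then $\Phi(\Gamma)\cong\Phi(G)\times\Phi(G')$. The first point to record is the structural one: writing $F^{(k)}$ for the union of the first $k$ arms $A_1,\dots,A_k$ (all sharing the central vertex $v_0$), one has $F^{(k)}=F^{(k-1)}\cup_{v_0}A_k$, and the two pieces $F^{(k-1)}$ and $A_k\cong C_3$ meet in the single vertex $v_0$ and in no edge. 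With the base case $F^{(1)}=A_1$ being trivial, the gluing principle gives $\Phi(F^{(k)})\cong\Phi(F^{(k-1)})\times\Phi(A_k)$, and substituting the inductive hypothesis $\Phi(F^{(k-1)})\cong\Phi(A_1)\times\cdots\times\Phi(A_{k-1})$ yields the claim; the case $k=n$ is the theorem.

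To make the heart of the argument transparent I would also give the one-step computation underlying the gluing principle in this case. Delete the row and column of the central vertex $v_0$ from $L(F_n,\mathbf d)$. Since two distinct arms share no vertex other than $v_0$ and no edge, and since the two non-central vertices $v_{2i-1},v_{2i}$ of $A_i$ are mutually adjacent and otherwise joined only to $v_0$, the resulting reduced matrix is block diagonal, $\widetilde L=\bigoplus_{i=1}^{n}M_i$ with $M_i=\left(\begin{smallmatrix} d_{2i-1} & -1\\ -1 & d_{2i}\end{smallmatrix}\right)$. Each $M_i$ is exactly the matrix obtained from $A_i\cong C_3$ by deleting the row and column of its central vertex, so $\operatorname{coker}(M_i)\cong\Phi(A_i)$; since the cokernel of a block-diagonal integer matrix is the direct sum of the cokernels of the blocks, $\Phi(F_n)\cong\operatorname{coker}(\widetilde L)\cong\bigoplus_{i=1}^{n}\operatorname{coker}(M_i)\cong\bigoplus_{i=1}^{n}\Phi(A_i)$. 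For the Laplacian critical group each $M_i=\left(\begin{smallmatrix}2&-1\\-1&2\end{smallmatrix}\right)$, so $\Phi(A_i)\cong\Z/3\Z$ and $\Phi(F_n)\cong(\Z/3\Z)^{n}$, consistent with $\kappa(F_n)=3^{n}$.

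The step I expect to need the most care is the justification that deleting the central vertex $v_0$ actually computes $\Phi$, together with the matching claim for each arm. Passing to the cokernel of the matrix with one row and column removed recovers the torsion part $\Phi$ precisely when the deleted vertex carries $\mathbf r$-label $1$; this holds automatically at $v_0$ for the classical Laplacian structure ($\mathbf r=\mathbf 1$) treated here, and at the shared central vertex of each arm for the same reason, so the blocks $M_i$ genuinely compute $\Phi(A_i)$. I would therefore read the theorem for the Laplacian critical group (equivalently, for any structure whose central label equals $1$, in which case the restriction to each arm is a genuine arithmetical structure on $C_3$ by Theorem~\ref{Arith st. of F_n from C_3}); the residual divisibility $r_0\mid r_{2i-1}+r_{2i}$ needed to even define $\Phi(A_i)$ is then automatic. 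The remaining verification is the purely combinatorial observation that the off-diagonal blocks of $\widetilde L$ vanish because distinct arms are internally disjoint away from $v_0$.
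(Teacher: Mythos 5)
Your argument is correct as you have qualified it, and its core mechanism is the same as the paper's: the paper proves the theorem in one line by citing its preceding gluing observation ($\Phi(\Gamma)\cong\Phi(G)\times\Phi(G')$ when two graphs are identified at one vertex), and that observation is justified there by exactly the row/column clearing and block-splitting you carry out explicitly at $v_0$. The genuinely different and valuable part of your write-up is the third paragraph: you insist that the deleted (glued) vertex carry $\mathbf{r}$-label $1$, and you accordingly restrict the theorem to such structures. This is not over-caution; it is a necessary hypothesis that the paper omits. The paper's justification ("adding all the rows of $L$ to the $i$th row" annihilates it) is valid only when the kernel of $L$ is spanned by the all-ones vector; for a general arithmetical structure the kernel is spanned by $\mathbf{r}$, and the required operation is unimodular only when the label at the glued vertex is $1$. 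Without that hypothesis the statement is false: take $F_2$ with $\mathbf{r}=(2,3,1,1,1)$, a structure assembled from the $C_3$-structures $(2,3,1)$ and $(2,1,1)$ with shared central label $r_0=2$, so $\mathbf{d}=(3,1,5,3,3)$. The arm critical groups are $\{e\}$ and $\mathbb{Z}_2$ (product of order $2$), yet deleting from $L(F_2,\mathbf{d})$ the row and column of $v_2$, whose label is $1$, gives
\[
\det\begin{pmatrix} 3 & -1 & -1 & -1 \\ -1 & 1 & 0 & 0 \\ -1 & 0 & 3 & -1 \\ -1 & 0 & -1 & 3\end{pmatrix}=8,
\]
so $\Phi(F_2,\mathbf{d},\mathbf{r})$ has order $8$, not $2$. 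Thus your restriction to structures whose central label equals $1$ (in particular the Laplacian structure) is exactly the scope on which the theorem is true, your proof of that case is sound, and your added care also exposes that the paper's subsequent application—listing the possible critical groups of $F_2$ for all structures derived from $C_3$-structures, including those with $r_0\in\{2,3\}$—overreaches what the gluing argument actually establishes.
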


\begin{proof}
The Proof follows from the previous observation.   
\end{proof}

Here we would like to recall the result from \cite{BHDNJC18} regarding the critical group of a cycle graph. For an arithmetical $r$-structure $\textbf{r}$, let $\textbf{r}(1) = \# \{i: r_i=1\}. $
We then refer to Theorem 29 from \cite{BHDNJC18}, which states:

Let $(\textbf{d},\textbf{r}) \in Arith(C_n)$ be an arithmetical structure on the cycle. Then $$\textbf{r}(1) = 3n - \sum _{j=1} ^n d_j$$ and $\Phi(C_n, \textbf{d}, \textbf{r}) = \mathbb{Z}_{\textbf{r}(1)} $. 

Now, for $n = 3$, we have a specific case of this theorem, which is stated as follows:

For $n = 3$, let $(\textbf{d},\textbf{r}) \in Arith(C_3)$ be an arithmetical structure on the cycle. Then $$\textbf{r}(1) = 9 - \sum _{j=1} ^3 d_j$$ and $\Phi(C_3, \textbf{d}, \textbf{r}) = \mathbb{Z}_{\textbf{r}(1)}$. 

We know that $|Arith(C_3)| = 10.$ Thus, we have the following table:
\begin{table}[H]
    \centering
\begin{tabular}{|c|c|c|}
  $\d$  & $\r$  & $\Phi(C_3, \textbf{d}, \textbf{r})$ \\
     \hline
  (2,2,2)  & (1,1,1)&  $\mathbb{Z}_{3}$ \\
  (5,2,1)   & (1,2,3) & $\{e\}$ \\
  (2,1,5) & (2,3,1) & $\{e\}$ \\
  1,5,2) & (3,1,2) & $\{e\}$ \\
  (5,1,2)& (1,3,2) & $\{e\}$ \\
  (1,2,5)& (3,2,1) & $\{e\}$ \\
  (2,5,1) & (2,1,3) & $\{e\}$ \\
  (3,3,1) & (1,1,2) & $\mathbb{Z}_2$ \\
  (3,1,3) & (1,2,1) & $\mathbb{Z}_2$\\
  (1,3,3)& (2,1,1) & $\mathbb{Z}_2$\\
\end{tabular}
    \caption{Critical group of $C_3$}
    \label{cgfan}
\end{table}
With this observation, we can write down the critical group of $F_n$. i.e., $\Phi(F_n) =  \Phi (A_1) \times \Phi (A_2) \times \cdots \times \Phi(A_n), $ where $\Phi(A_i) \in \{\mathbb{Z}_3, \mathbb{Z}_2, \{e\} \}, i =1, 2, \ldots, n.$

In particular, we can determine the possible critical groups of the fan graph 
$F_2$, whose $r$-structure is derived from the $r$-structure of the cycle graph $C_3.$ The critical group of  $F_2$ is given by:
$$\mathbb{Z}_3\times \mathbb{Z}_2, \,\, \mathbb{Z}_3\times \{e\}, \,\,\mathbb{Z}_2 \times \{e\}, \,\,\mathbb{Z}_3\times \mathbb{Z}_3, \,\,\mathbb{Z}_2 \times \mathbb{Z}_2, \,\,\{e\}.$$


\section{Conclusion}
In conclusion, this study provides a detailed analysis of arithmetical structures on Fan Graphs $F_n$, shedding light on their combinatorial properties and the underlying algebraic framework. Further, the investigation of the critical group associated with these arithmetical structures enhances the comprehension of their algebraic and combinatorial significance. 

\vspace{0.5cm}
\noindent{\bf Acknowledgment.}
The authors gratefully acknowledge the financial support provided by Sikkim University, Sikkim. 

\vspace{0.5cm}
\noindent{\bf Compliance with Ethical Standards.}
\begin{itemize}
\item {\bf Data Availability Statement:} Our manuscript has no associated data. 

\item {\bf Conflict of Interests:} The authors declare no conflict of interest.\\
 
\end{itemize}

\bibliographystyle{plain}
\bibliography{main}
\end{document}